\numberwithin{equation}{section}
\theoremstyle{plain}                
\newtheorem{theorem}{Theorem}[section]
\newtheorem{lemma}[theorem]{Lemma}
\newtheorem{proposition}[theorem]{Proposition}
\newtheorem{corollary}[theorem]{Corollary}
\theoremstyle{definition}           
\newtheorem{definition}[theorem]{Definition}
\newtheorem{example}[theorem]{Example}
\theoremstyle{remark}
\newtheorem{remark}[theorem]{Remark}
\DeclareMathOperator\Var{Var}
\newcommand{\tot}{\tfrac{1}{2}} 
\newcommand{\oo}[1]{\tfrac{1}{#1}}
\newcommand{\scl}[2]{\langle #1,#2 \rangle} 
\newcommand{\sabs}[1]{| #1 |} 
\newcommand{\abs}[1]{\left| #1 \right|} 
\newcommand{\babs}[1]{\big| #1 \big|} 
\newcommand{\Babs}[1]{\Big| #1 \Big|} 
\newcommand{\set}[1]{\{#1\}} 
\newcommand{\sets}[2]{\set{#1\,:\,#2}} 
\newcommand{\Bset}[1]{\Big\{#1\Big\}} 
\newcommand{\Bsets}[2]{\Bset{#1\,:\,#2}} 
\newcommand{\ind}[1]{ {\mathbf 1}_{{#1}}} 
\newcommand{\inds}[1]{ {\mathbf 1}_{\set{#1}}} 
\newcommand{\seq}[1]{\set{#1_n}_{n\in\N}} 
\newcommand{\seqk}[1]{\set{#1_k}_{k\in\N}} 
\newcommand{\sq}[2][n]{\{ #2 \}_{#1\in\N}}  
\newcommand{\sqk}[1]{\{ #1 \}_{k\in\N}}  
\newcommand{\norm}[1]{{||#1||}} 
\newcommand{\bnorm}[1]{{\big|\big|#1\big|\big|}} 
\newcommand{\lnorm}[1]{{\left\|#1\right\|}} 
\newcommand{\ft}[2]{#1\dots#2}
\renewcommand{\ft}[2]{#1,\dots,#2}
\newcommand{\prf}[1]{ \{ #1 \}_{t\in [0,T]}}
\newcommand{\downto}{\searrow}
\providecommand{\R}{} \renewcommand{\R}{{\mathbb R}}
\newcommand{\N}{{\mathbb N}}
\newcommand{\PP}{{\mathbb P}}
\newcommand{\EE}{{\mathbb E}}
\newcommand{\FF}{{\mathcal F}}
\newcommand{\ee}[1]{ \bE \left[ #1 \right] }
\newcommand{\Bee}[1]{ \bE \Big[ #1 \Big] }
\newcommand{\Beec}[2]{ \Bee{#1 \Big| #2} }
\newcommand{\eeq}[2]{ \bE^{#1}\left[ #2 \right] }
\newcommand{\Beeq}[2]{ \bE^{#1}\Big[ #2 \Big] }
\newcommand{\eecq}[3]{ \bE^{#1}\left[ #2 | #3 \right] }
\newcommand{\izt}{\int_0^t}
\newcommand{\izT}{\int_0^T}
\newcommand{\FFF}{{\mathbb F}}
\newcommand{\eps}{\varepsilon}
\newcommand{\ld}{\lambda}
\newcommand{\vp}{\varphi}
\newcommand{\el}{{\mathbb L}} 
\newcommand{\lone}{\el^1}
\newcommand{\ltwo}{\el^2}
\newcommand{\lpee}{\el^p}
\newcommand{\tolone}{\stackrel{\lone}{\rightarrow}}
\newcommand{\define}[1]{{\textbf{#1}}}
\newcommand{\suppar}[2]{#1^{(#2)}}
\newcommand{\Yn}{\suppar{Y}{n}}
\newcommand{\Ywn}{Y^{[n]}}
\newcounter{notecounter}
\newcommand{\efor}{\text{ for }}
\newcommand{\eforall}{\text{ for all }}
\newcommand{\eand}{\text{ and }}
\newcommand{\ewhere}{\text{ where }}
\newcommand{\cd}{c\` adl\` ag} 
\newcommand\sA{{\mathcal A}}
\newcommand\sB{{\mathcal B}}
\newcommand\sD{{\mathcal D}}
\newcommand\bE{{\mathbb E}}
\newcommand\sF{{\mathcal F}}
\newcommand\sM{{\mathcal M}}
\newcommand\sN{{\mathcal N}}
\newcommand\sT{{\mathcal T}}
\newcommand\sU{{\mathcal U}}
\newcommand\sV{{\mathcal V}}
\newcommand{\sDm}{\sD_{\uparrow}}
\newcommand{\can}{\sD^{d+k}_{\cdot, \uparrow}}
\newcommand{\cano}{\sD^{d+1}_{\cdot, \uparrow}}
\newcommand{\sDf}{\sD_{fv}}
\newcommand{\An}{A^{(n)}}
\newcommand{\Awn}{A^{[n]}}
\newcommand{\Bn}{B^{(n)}}
\newcommand{\bFl}{\bar{\sF}^{L}}
\newcommand{\Fn}{F^{(n)}}
\newcommand{\Fwn}{F^{[n]}}
\newcommand{\Ln}{L^{(n)}}
\newcommand{\Mn}{M^{(n)}}
\newcommand{\Mwn}{M^{[n]}}
\newcommand{\Twn}{T^{[n]}}
\newcommand{\Vn}{\tilde{V}^{[n]}}
\newcommand{\Nn}{N^{(n)}}
\newcommand{\Nwn}{N^{[n]}}
\newcommand{\Pn}{\PP^{(n)}}
\newcommand{\tPn}{\tPP^{(n)}}
\newcommand{\hPP}{\hat{\PP}}
\newcommand{\hPPn}{\hPP^{(n)}}
\newcommand{\ltp}{\ltwo([0,T]\times \Omega, \prog)}
\newcommand{\prob}{{\mathfrak P}}
\newcommand{\prog}{\mathrm{Prog}}
\newcommand{\sAn}{\mathcal A^{[n]}}
\newcommand{\sun}{\sU^{[n]}}
\newcommand{\tPP}{\tilde{\PP}}
\newcommand{\tomz}{\stackrel{\mathrm{MZ}}{\rightarrow}}
\newcommand{\topp}{\stackrel{\mathrm{pp}}{\rightarrow}}
\newcommand{\un}{u^{[n]}}
\newcommand{\ofl}{\otimes_{L}}
\newcommand{\da}{\partial}
\newcommand{\op}[1]{ {}^{\circ}\! #1}
\newcommand{\opp}[1]{ {}^{\circ}\!\! #1}
\newcommand{\canp}{\prob_{\cdot, \uparrow}^{d+k}}
\newcommand{\sAwn}{\sA^{[n]}}
\newcommand{\Vwn}{V^{[n]}}
\newcommand{\as}{\text{a.s.}}
\newcommand{\Leb}{\text{Leb}}
\newcommand{\oPP}{\op{\,\PP}}
\newcommand{\oA}{\opp{A}}
\newcommand{\Ae}{A^{\eps}}
\title%
[The Monotone-Follower Problem]%
{Existence, Characterization and Approximation in the Generalized Monotone-Follower Problem}
\author{Jiexian Li}
\address{Jiexian Li, Department of Mathematics\\
The University of Texas at Austin}
\email{jxli@math.utexas.edu}
\thanks{
  \emph{Acknowledgements:}
  Both authors would like to thank Ioannis Karatzas and Mihai S\^\i rbu for
  valuable conversations, and the anonymous referee for a
  simplification 
  of the proof of Theorem 2.7 and other insightful comments.
  The second author, furthermore, acknowledges the support
  by the National Science Foundation under Grants No. DMS-0706947 (2010 - 2015),
  No.~DMS-1107465 (2012 - 2017) and No.~DMS-1516165 (2015-2018). Any
  opinions, findings and
  conclusions or recommendations expressed in this material are those of the
  author(s) and do not necessarily reflect the views of the National Science
  Foundation (NSF)}
\author{Gordan \v Zitkovi\' c}
\address{Gordan \v Zitkovi\' c, Department of Mathematics\\
The University of Texas at Austin}
\email{gordanz@math.utexas.edu}
\begin{document}

\keywords{
maximum principle,
Meyer-Zheng convergence,
monotone-follower problem,
optimal stochastic control,
optimal stopping,
singular control
}

\subjclass[2010]{93E20}

\date{\today}

\begin{abstract}
  We revisit the classical monotone-follower problem and consider it in a
  generalized formulation. Our approach is based on a compactness
  substitute for nondecreasing processes, the Meyer-Zheng weak convergence,
  and the maximum principle of Pontryagin. It establishes existence under
  weak conditions, produces general approximation results and further
  elucidates the celebrated connection between 
  singular stochastic control
  and stopping.
  \end {abstract}

\maketitle


\section{Introduction}
\label{sec:intro}

A direct precursor to the monotone-follower problem dates back to the 1970's;
the basic model originated from engineering and first appeared
in the
work of Bather and Chernoff \cite{BatChe67}. There, it was posed in a
model  of a spaceship being steered towards a target with both precision and
fuel consumption appearing in the performance criterion. The authors observed an
unexpected connection between the control problem they studied and a Brownian
optimal stopping problem based on the same ingredients; arguing quite
incisively, but mostly on heuristic grounds, they demonstrated that the
value function
of the latter is the gradient of the value function of the former.

In 1984, Karatzas and Shreve \cite{KarShr84} considered a generalized version of
the Bather-Chernoff problem dubbing it the ``monotone follower problem''. In
the same paper, using purely probabilistic tools, they established
rigorously the equivalence of the control and stopping problems under
appropriate continuity and growth conditions.  Some time later, Haussmann and
Suo \cite{HauSuo95a} applied relaxation and compactification methods, used the
Meyer-Zheng convergence, and showed existence of the optimal control under a
different set of conditions.  In 2005, Bank \cite{Ban05} constructed a fairly
explicit control policy under stochastic dynamic fuel constraint in one dimension.
Subsequently, Budhiraja and
Ross \cite{BudRos06} applied the Meyer-Zheng convergence to prove a general
existence theorem, also under a fuel constraint.  Guo and Tomecek \cite
{GuoTom08}
 generalized some
results of \cite{KarShr84} in a different direction: they established a connection
between singular control of finite variation and optimal switching.

\medskip

\paragraph{\bf Problem formulation.}
The essence of the   monotone follower problem is
tracking, as closely
as possible, a given random process $L$ (the \emph{target}) by a suitably
constrained control process $A$ (the \emph{follower}). In the original setting
of \cite{KarShr84}, the target is a Brownian motion, the follower is required to
be adapted and non-decreasing, and the ``closeness'' is measured by applying an
appropriate functional to the state variable defined as the difference between
the
position of the target and the position of the follower. Our  version of this
problem is generalized in two directions:

\qquad (a)\ We allow the dynamics of both the target and the follower to be
multidimensional and impose weak assumptions on the distribution of
dynamics the target $L$. For our existence and characterization results
(Theorems \ref{thm:exist} and \ref{thm:max-equiv} below), we only require
that $L$ has \cd {} paths.
For the approximation (Theorem \ref{thm:MZ} below),
we need $L$ to be a Feller process (still allowing, in
particular, inhomogeneities in the cost structure). Also, we consider
functionals which are functions of the target and the follower,
convex in the position of the follower, and not only functions of their relative
positions. Finally, we relax some of the growth assumptions; in particular, we
do not require superlinear growth
  of the cost function to
obtain existence of an optimal control (as in, e.g.,  \cite{Ban05},
where it serves as a sufficient
  condition for the existence of a solution to a stochastic representation
  problem which, in turn, characterizes the optimizer.)

\qquad (b) \ Our formulation is
weak (distributional), in the sense that we are only interested in the joint
distribution of the follower and the target, without fixing the underlying
filtered probability space and making it a part of the problem. This
enables us to prove an approximation result (Theorem \ref{thm:MZ} below)
in great generality. On the other hand,
 as we will see below in
Proposition
\ref{pro:optional}, every weak (distributional) solution can be turned into a
strong one under usually met conditions by a simple projection operation.
Moreover, as far as generality is concerned, any setup where the filtration
is generated by a finite number of \cd{} processes can be easily lifted to our
canonical framework, allowing us to work with on a canonical (Skorokhod) space
right from the start. It is worth noting that (even though we do not provide details
for such an approach here) even greater generality can be achieved by
considering Polish-space-valued \cd{} processes and their natural filtrations.

\paragraph{\bf Our results.} We treat questions of
existence,
approximability and characterization (via Pontryagin's maximum principle), as
well as
connections with optimal stopping. These are tackled using
a variety of methods, including a compactness substitute for monotone
processes and the Meyer-Zheng convergence. Moreover, we posit
the idea that
\begin{quote}\em
the connection between control and stopping can be
understood as the connection between the monotone-follower problem and its
Pontryagin maximum principle.
\end{quote}

The original impetus for our research was twofold:

\qquad (a)\  On the one hand, we
wanted to understand the role played by different regularity and
growth conditions
imposed in the existing literature in order to establish existence of optimal
controls.
This lead to an  existence proof (Theorem \ref{thm:exist} below)  under
less restrictive conditions on most ingredients. The proof
is based on a convenient substitute for compactness under convexity, and not
on the
Meyer-Zheng topology as in some of the papers mentioned above.
The beginnings of such an approach can be traced back to the fundamental result
of Koml\'os
\cite{Sch86}, while the version used in the present paper is due to
Kabanov \cite{Kab99}.

\qquad (b) On other hand - perhaps more importantly -  we tried to grasp a
more practical issue better, namely, the approximation of the
archetypically singular monotone-follower problem by a sequence of regular,
absolutely continuous (even Lipschitz) control problems.  To accomplish
this task, the following conceptual framework was devised. First, a
sequence of so-called ``capped'' problems where the exerted controls are
constrained to be Lipschitz is posed. These \emph{regular} problems come
with increasing upper bounds on the Lipschitz constant and are expected to
approach the monotone control problem both in value and in optimal
controls.  Being regular and well-behaved, each capped problem is expected
to be solvable by the well-known classical methods;  the resulting solution
sequence is, then, expected to converge (in the appropriate sense) towards
the solution to the original problem.

The second, larger, part of the paper can be seen as the implementation
of the above steps. The major difficulty we encountered was the lack of good
equicontinuity estimates on the solutions to the capped problems. To overcome it
we replaced the usual weak convergence under the Skorokhod topology with the
versatile Meyer-Zheng convergence. Even so, we still needed to close the gap
between the limit of the values of the capped problems and the value of the
original problem. For that, we characterized the optimizers (both in the
capped and the original problems) via the maximum
principle of Pontryagin (i.e., the ``first-order'' condition) and passed to a Meyer-Zheng limit there.

While ideas described in the previous paragraphs seem to be new,
the research relating Pontryagin's maximum principle to  singular
control problems is certainly not.  Indeed, the Pontryagin's maximum principle
for singular control problems was first discussed by Cadenillas and Haussmann
\cite{CadHau94} already in 1994. With Brownian dynamics, convex cost, and state
constraints assumed, these authors formulated the stochastic maximum principle
in an integral form and gave necessary and sufficient conditions for optimality.
In order to solve the approximation problem via maximum principle, however, one
must
go beyond their work.
Even though the last
20 years have seen an explosion in activity in the general theory of BSDE and
FBSDE (see e.g., \cite{MaProYon94}, \cite{CviMa96}, \cite{MaJon99},
\cite{MaCvi01}, \cite{AntMa03}, \cite{MaZha11}), to the best of our knowledge
none of the existing work seems to be able to deal directly with the singular
FBSDE that the maximum principle for the monotone-follower problems yields, even
in the Brownian case.  Our route, via approximation and simultaneous
consideration of the related (capped) control problems, can be interpreted as a
variational approach to a class of singular FBSDE and may, perhaps, be of use in
other situations, as well. For example, a combination (see Corollary \ref
{cor:FBSDE} below) of
our existence and
characterization results, i.e., Theorems \ref{thm:exist} and \ref{thm:max-equiv},
guarantees existence of solutions of such FBSDE under weak, monotonicity- and
exponential-growth-type
assumptions on the nonlinearities.

The approximation result (Theorem \ref{thm:approx} below) serves as a
pleasant justification of singular controls as a conceptual limit of
absolutely continuous controls.  Moreover, together with the related
maximum-principle characterization of the optimal controls in the original
problem, it leads us to view the celebrated connection between stopping and
control in a new light.  Indeed, once such a characterization is
formulated, it is a simple observation that it can be re-interpreted as an
optimal stopping problem, which turns out to be precisely the
optimal
stopping problem identified by Bather and Chernoff and rigorously studied
by Karatzas and Shreve.

\medskip

\paragraph{\bf Organization of the paper.}  After this Introduction, Section
2.~contains the formulation of the problem, a description of the probabilistic
setup it is defined on, and main results. Section 3.~is devoted to proofs. At
the end,  a short compendium of the most important well-known results -
including the tightness criteria - on the Meyer-Zheng topology is given in
Appendix A.

\section{The Problem and the Main Results} \label{sec:ingr}
\subsection{Notational conventions and the canonical setup}
For $N\in\N$, let $\sD^N$ denote the Skorokhod space, i.e., the  measurable
space of all $\R^N$-valued \cd{} functions on $[0,T]$, equipped with the
$\sigma$-algebra generated by the coordinate maps. Since the same
$\sigma$-algebra appears as the Borel $\sigma$-algebra generated by the
Skorokhod topology, as well as by most of the other popular topologies on
$\sD^N$, we call it simply the Borel $\sigma$-algebra.  The set of all
probability measures on the Borel $\sigma$-algebra of $\sD^N$ is denoted by
$\prob^N$. The probabilistic notation $\EE^{\PP}[\cdot]$ is used to denote the
integration with respect to a probability measure in $\prob^N$.

The components of the coordinate process $X$ on $\sD^N$ are generally denoted by
$X^1,\dots, X^N$.  Given a subset  $(X^{i_1},\dots, X^{i_K})$, with $K\leq N$,
of the components of $X$, we denote by $\pi_{X^{i_1},\dots, X^{i_K}}$ the
projection map $\sD^N \to \sD^K$.  For $\PP\in\prob^N$, $\pi_{X^{i_1},\dots,
X^{i_K}}$ induces a probability measure on $\sD^K$, which we call the
$(X^{i_1},\dots, X^{i_K})$-marginal of $\PP$  and denote simply by
$\PP_{X^{i_1},\dots, X^{i_K}}$.

Often, we group sets of variables into single-named vector-valued components to
increase readability. The dimensionality of these components will always be
clear from the context, with the definition of the marginal extending naturally.
To make it easier for the reader, we often employ the notation of the form
$\sD^{d+k}(L,A)$ or $\prob^{d+k}(L,A)$ to signal the fact that the first $d$
coordinates are collectively denoted by $L$ and the remaining $k$ by $A$.  In
the same spirit, we consider (raw) filtrations of the form
$\FFF^Y=\prf{\sF^Y_t}$, $\sF^Y_t = \sigma( Y^1_s, \dots, Y^K_s ; s\leq t)$,
$t\in [0,T]$, on $\sD^N$,   with $Y$ denoting some (or all) components of $X$.
The notation for their right-continuous enlargements is
$\FFF^Y_+=\prf{\sF^Y_{t+}}$, where $\sF^Y_{t+}=\cap_{s>t} \sF^Y_s$. Unless
explicitly stated otherwise,  the usual conditions of right-continuity and
completeness \emph{are not} assumed. When the filtration is, indeed, completed,
and the measure $\PP$ under which the filtration is completed is clear from the
context, we add a bar above $\FFF$
(as in $\bar{\FFF}^Y_{+}$, e.g.).

Some of the components of the coordinate process will naturally come with
further constraints, most often in the form of monotonicity: the subset $\sDm^N$
of $\sD^N$ denotes the class of (component-wise) nondecreasing paths $A$ with
$A_0\geq 0$ (this is natural in our context because we will think of all
functions as taking the value $0$ on $(-\infty,0)$). If monotonicity is required
only for a subset of components, the suggestive notation $
\sD^{K_1+K_2}_{\cdot, \uparrow}$ is used. The
intended meaning is that only the last $K_2$ components are assumed to be
nondecreasing. Similarly, if the monotonicity requirement  is replaced by that of
finite variation, the resulting family is denoted by $\sDf^K$ (unlike in the
case of $\sDm^K$, no nonnegativity requirement on $A_0$ is imposed for
$\sDf^K$). Analogous notation will be used for sets of probability measures, as
well.

For $A\in\sDf^1$ and a measurable (sufficiently integrable) function $f:[0,T]\to
\R$, we use the appropriately-adjusted version of the Stieltjes integral. Namely,
we
define
\begin{align*}
  \int_{[0,T]} f(t)\, dA_t := f(0) A_{0} + \int_0^T f(t)\, dA_{t},
\end{align*}
where the integral on the right is the standard Lebesgue-Stieltjes integral on
$(0,T]$, of $f$ with respect to $A$.  This corresponds to the interpretation of
the process $A$ as having a jump of size $A_0$ just prior to time $0$. This
way, we can incorporate an initial jump in the process $A$ while staying in the
standard \cd{}  framework; the price we are comfortable with paying is that the
implicit value $A_{0-}=0$ has to be fixed. For multidimensional integrators and
integrands, the same conventions will be used, with the usual interpretation of
the multivariate integral as the sum of the component-wise integrals.

\subsection{The monotone-follower problem}

Given $d,k\in\N$, we  consider the path space $\can(L,A)$, where $L$ plays the
role of the target and $A$ the (controlled) monotone follower. As mentioned
above, the natural, raw, $\sigma$-algebras generated by the processes $L$ and
$A$ are denoted by $\FFF^L=\prf{\sF^L_t}$ and $\FFF^A=\prf{\sF^A_t}$,
respectively. A central object in the problem's setup is the probability measure
$\PP_0$ on $\sD^{d}$ which we interpret as the law of the dynamics of the
target.  No additional assumptions are placed on it at this point, but  for
some of
our results to hold, we will need to  require more structure later. On the other
hand,
all our results go through if $L$ is assumed to take values in a Hausdorff
locally-compact topological space with countable base
instead of $\R^d$,
but we keep everything Euclidean for simplicity.

In the spirit of our weak approach, we control the follower by choosing its
joint distribution with the target $L$, in a suitably defined admissibility
class. In the definition below, the condition $\PP_L=\PP_0$ ensures that $L$ has
the prescribed marginal distribution, while the conditional-independence
requirement imposes a form of non-anticipativity on the control:

\begin{definition}[Admissible controls]
  \label{def:adm-contr}
  A probability $\PP\in\canp(L,A)$ is called \define{admissible},
  denoted by $\PP\in\sA$, if \begin{enumerate}
  \item $\PP_L=\PP_0$, and
  \item for each $t\geq 0$, conditionally on $\sF^L_{t+}$, the $\sigma$- algebras $\sF^A_{t}$ and $\FF^L_T$ are $\PP$-independent.
  \end{enumerate}
  If, additionally,
  $\sF^A_t \subseteq \sF^{L}_{t+},\text{ for all }t\in [0,T]$, up to
  $\PP$-negligible sets,
  we say that $\PP$ is \define{strongly admissible}.
\end{definition}

\begin{remark} The condition (2) in Definition \ref{def:adm-contr} above
can be thought of as a non-anticipativity constraint where additional,
$L$-independent, randomization is allowed; it is a version of the so-called
\emph{hypothesis ($\mathcal H$)} of Br\'emaud and Yor (see
\cite{BreYor78}). We point out that the choice of the right-continuous
augmentation $\sF^L_{t+}$ is crucial for our results to hold (see Example
\ref{exa:counter} below), but also that it reverts to the usual hypothesis
$(\mathcal H)$ as soon as a version of the Blumenthal's 0-1 law holds for
$L$.  \end{remark}

The quality of the tracking job  is measured by a
nonnegative convex cost functional:

\begin{definition}[Cost functionals] \label{def:cost-funct} A map
$C:\can(L,A)\to [0,\infty]$, is called a \define{cost   functional} if
there exist measurable functions
\begin{align*}
f:[0,T]\to [0,\infty)^k,\, g:\R^{d}
\times [0,\infty)^k \to [0,\infty) \eand h:\R^{d} \times [0,\infty)^k \to
[0,\infty),
\end{align*}
such that $f$ is continuous,  $h(l,\cdot)$, and $g(l,\cdot)$
are convex on $[0,\infty)^k$, for each $l\in\R^d$, and
\begin{align*}
C(L,A) =
\int_{[0,T]} f(t)\, dA_t + \izT h(L_t,A_t)\, dt + g(L_T,A_T).
\end{align*}
\begin{remark}
\label{rem:5049}
The role of the process $L$ in the cost functional $C$ above is two-fold.
Some of its components play the role of the target to be
tracked, while the others allow the functions $h$ and $g$ to depend on time
or on the randomness from the environment. We enforce this interpretation
in the sequel by making as few assumptions on $L$ as possible, in particular about its
relation to $A$. See Remark, \ref{rem:sufficient}, (2), as well.
\end{remark}
\end{definition}

\begin{definition}[Cost associated with a control]
\label{def:cost-contr}
Given a cost functional $C$ and an admissible probability $\PP\in\sA$, the
\define{(expected) cost} $J(\PP)$ of $\PP$ is given by
\begin{align*}
J(\PP) =
\EE^{\PP}[C(L,A)]\in [0,\infty],
\end{align*}
where $L$ and $A$ denote the components
of dimensions $d$ and $k$, respectively, in $\can$.
\end{definition}

\begin{definition}[Value and solution concepts]
\label{def:mon-fol}
The \define{value} of the monotone-follower problem is given by
\begin{align*}
V = \inf_{\PP\in\sA} J(\PP).
\end{align*}
A probability measure $\hPP\in\sA$ is said to be a
\define{weak solution} to the monotone-follower problem
if $J(\hPP)<\infty$ and  $V=J(\hPP)$. If such $\hPP$ is strongly admissible, we say that the solution is \define{strong}.
For $\eps>0$, a (weak or strong) \define{$\eps$-optimal solution} is a (weakly or strongly) admissible $\PP$ with $J(\PP)<V+\eps$.
\end{definition}

\subsection{ An existence result}
Our first result establishes existence in the monotone-follower problem
(Definition \ref{def:mon-fol})
under weak conditions. Here, and in the sequel, $\abs{\cdot}$ denotes the Euclidean norm on $\R^k$.
\begin{theorem}[Existence under linear coercivity]
\label{thm:exist}
Suppose that the cost function $C$ is linearly coercive, i.e., that
there exist  constants $\kappa, K>0$ such that
\begin{equation}
\label{asm:coercive}
\EE^{\PP}[ C(L,A) ] \geq \kappa \EE^{\PP}[ \abs{A_T} ],
\ \text{ for all }\  \PP\in\sA \text{ with } \EE^{\PP}[ \abs{A_T}]\geq K.
\end{equation}
Then the monotone-follower problem admits a strong solution
whenever its value is finite.
\end{theorem}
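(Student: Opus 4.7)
The strategy is the classical ``minimizing sequence plus compactness'' one, with compactness supplied by Kabanov's version of Koml\'os' theorem for non-decreasing processes rather than by the Meyer--Zheng topology. Fix a minimizing sequence $(\PP_n)\subset\sA$ with $J(\PP_n)\le V+1$ and $J(\PP_n)\downarrow V$. Linear coercivity yields a uniform $L^1$-bound at the terminal time: either $\EE^{\PP_n}[\abs{A_T}]\le K$, or \eqref{asm:coercive} gives $\kappa\,\EE^{\PP_n}[\abs{A_T}]\le J(\PP_n)\le V+1$, so in both cases $\sup_n \EE^{\PP_n}[\abs{A_T}]\le M:=\max\{K,(V+1)/\kappa\}$. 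Since each component of $A$ is non-decreasing and starts from a non-negative value, $\abs{A_t}\le \abs{A_T}$, and the bound transfers uniformly to every $t\in[0,T]$.

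Next, I realize all the $\PP_n$ on a single probability space. On the enlarged canonical space $(\sD^d\times[0,1],\,\PP_0\otimes\Leb)$, with $L$ the first coordinate and an independent uniform $U$ the second, a standard disintegration argument produces, for each $n$, a non-decreasing \cd{} process $A^{(n)}$ with joint law $\mathrm{Law}(L,A^{(n)})=\PP_n$ and with $A^{(n)}_t$ measurable with respect to $\sigma(\sF^L_{t+},U)$ for every $t$; the existence of such an adapted realization is precisely what the conditional independence in Definition~\ref{def:adm-contr} (the Br\'emaud--Yor hypothesis~$(\mathcal H)$) was designed to encode. Kabanov's lemma, applied to the $L^1$-bounded family of non-decreasing processes $(A^{(n)})$, then produces convex combinations $\hat A^{(n)}:=\sum_{k\ge n}\lambda_k^{(n)}A^{(k)}$ and a non-decreasing \cd{} limit $\hat A$ such that $\hat A^{(n)}_t\to\hat A_t$ almost surely on a dense subset of $[0,T]$ containing $T$.

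Set $\hPP:=\mathrm{Law}(L,\hat A)$. Admissibility is essentially automatic: $\hPP_L=\PP_0$ by construction, and the measurability relation $\hat A^{(n)}_t\in\sigma(\sF^L_{t+},U)$ is preserved under both convex combinations and almost-sure limits, so $\hat A_t\in\sigma(\sF^L_{t+},U)$ up to null sets, which is equivalent to the conditional independence required by Definition~\ref{def:adm-contr}. For optimality, convexity of $g(l,\cdot)$ and $h(l,\cdot)$ together with the linearity of $A\mapsto\int_{[0,T]}f\,dA$ yields the pathwise inequality $C(L,\hat A^{(n)})\le\sum_k\lambda_k^{(n)}C(L,A^{(k)})$; taking expectations gives $\limsup_n\EE[C(L,\hat A^{(n)})]\le V$. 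The matching lower bound $\EE[C(L,\hat A)]\le\liminf_n\EE[C(L,\hat A^{(n)})]$ is the principal technical obstacle, as it requires lower semicontinuity along a convergence that holds only on a dense set of times: the Lebesgue part $\int_0^T h(L_t,A_t)\,dt$ is handled by Fatou together with the continuity of $h(l,\cdot)$, while the Stieltjes part $\int_{[0,T]}f\,dA$ requires a Helly-type argument exploiting monotonicity of the $\hat A^{(n)}$, continuity of $f$, and convergence at $T$. These together deliver $\EE[C(L,\hat A)]=V$, so $\hPP$ is a weak solution, and Proposition~\ref{pro:optional} promotes it to a strong one.
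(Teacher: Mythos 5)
Your overall route is the same as the paper's: pick a minimizing sequence, use linear coercivity to get a uniform $L^1$-bound on $A_T$, realize everything on a common probability space, apply Kabanov's Koml\'os-type lemma to Ces\`aro means, and pass to the limit by Fatou using convexity. The key structural difference is the \emph{order} in which you invoke Proposition~\ref{pro:optional}: the paper applies it \emph{first}, replacing each $A^{(n)}$ by its optional projection onto $\bar{\FFF}^L_+$ before running Kabanov, whereas you run Kabanov first and project only at the very end. This reordering introduces a genuine gap.

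Specifically, your claim that a ``standard disintegration argument'' produces, for each $n$, a realization $A^{(n)}$ on $(\sD^d\times[0,1],\PP_0\otimes\Leb)$ with $A^{(n)}_t\in\sigma(\sF^L_{t+},U)$ for every $t$ is not standard and is not an obvious consequence of Definition~\ref{def:adm-contr}(2). That condition is the Br\'emaud--Yor hypothesis $(\mathcal H)$, a statement about conditional independence of the joint law; translating it into a \emph{progressively adapted} functional representation $A_t=\Phi_t(L_{\cdot\wedge t},U)$ with a single static uniform $U$ requires a nontrivial measurable-selection argument and is considerably more than the construction of a common carrier space. The paper's Lemma~\ref{lem:prob-space} deliberately claims less — it only builds a common probability space with the correct joint laws and conditional independence of the $A^{(n)}$'s given $L$ — and then Proposition~\ref{pro:optional} handles adaptedness by projection without ever needing the representation you invoke. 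Kabanov's Lemma~3.5 in \cite{Kab99} also wants the processes adapted to a right-continuous complete filtration; $\bar{\FFF}^L_+$ is such by construction, whereas $\sigma(\sF^L_{t+},U)$ is not obviously right-continuous, so you would still have to pass to its usual augmentation and re-check adaptedness.

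Two minor points. First, once Kabanov's lemma gives a.s.\ weak convergence of the random Stieltjes measures $d\hat A^{(n)}\to d\hat A$, the convergence $\int_{[0,T]}f\,d\hat A^{(n)}\to\int_{[0,T]}f\,d\hat A$ for continuous $f$ is immediate — you do not need to reconstruct it via a separate Helly-type argument as you suggest. Second, your statement that $\hat A_t\in\sigma(\sF^L_{t+},U)$ is ``equivalent to'' the conditional independence requirement overstates it: that measurability is \emph{sufficient} for admissibility (since $U\perp L$), but not necessary, and the sufficiency direction is the one you need anyway.

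The simplest repair is to follow the paper's ordering: apply Proposition~\ref{pro:optional} to the processes produced by Lemma~\ref{lem:prob-space} so that all $A^{(n)}$ are $\bar{\FFF}^L_+$-adapted, then apply Kabanov on $\bar{\FFF}^L_+$; the limit $B$ is then $\bar{\FFF}^L_+$-adapted and the law of $(L,B)$ is strongly admissible automatically, with no final projection step needed.
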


\begin{remark}
The reader will immediately notice that the linear coercivity condition
\eqref{asm:coercive} is a fairly weak requirement, guaranteed by either strict
positivity of $f$, or uniform (over $l$) boundedness from below of the function
$g$ by a strictly increasing linear function in $a$, for large $a$. Small modifications of our
results can be made to deal with the case $g=0$, when similar, linear,
coercivity is asked of $h$. Similarly,
one can relax \eqref{asm:coercive} even further by
passing to an equivalent probability measure on the right-hand side.  We leave
details to the reader who comes across a situation in which such an extension is
needed.
\end{remark}

The following two examples show that neither one of the two major
conditions - linear coercivity of \eqref{asm:coercive} in Theorem
\ref{thm:exist}, or the use of the right-continuous augmentation
$\sF^L_{t+}$ in the definition of admissibility (Definition
\ref{def:adm-contr}) - can be significantly relaxed:
\begin{example}[Necessity of assumptions] \label{exa:counter} As for the
  coercivity assumption \eqref{asm:coercive}, a trivial example can be
  constructed with $T=1$, $f=0$, $h=0$, $k=d=1$, $g(l,a)=e^{-a}$ and an
  arbitrary $\PP_0$. The value of the problem is clearly $0$, but no
  minimizer exists. Linear coercivity clearly fails, too.

  In order to argue that the right-continuous augmentation $\FFF^L_+$ in the
  Definition \ref{def:adm-contr} is necessary, we take  $T=1$ and assume that
  the dynamics of the target satisfies
  \begin{align*}
    L_t=t L_1 \efor t\in [0,1],
    \PP_0\mathrm{-} \as, \text{ with } \PP_0[L_1=1]=\PP_0[L_1=0]=\tot,
  \end{align*}
  and
  that the cost functional is given by
  \begin{align*}
    C(L,A)=\int_{[0,1]}  (\tfrac 1
    2+t)\,dA_t+\int_0^1 \abs{L_t-A_t}\,dt.
  \end{align*}
  Let $\PP^*\in \canp(L,A)$ be such
  that $\PP^*_L=\PP_0$ and $A^*_t = \oo4 L_1$, for all $t\in [0,1]$,
  $\PP^*$-a.s. Since the admissibility requires that $\sigma(A_0)$ and
  $\sigma(L_T)$ be independent, $\PP^*$ is not admissible. It does have the
  property that
  \begin{equation}
    \label{equ:16EF}
    \begin{split}
      J(\PP^*)\leq J(\PP),\text{ for each }\PP\in\canp\text{ with }\PP_L=\PP_0.
    \end{split}
  \end{equation}
  Indeed, one can check that
  \begin{align*}
    C(0,0) \leq C(0,\alpha) \eand C(\iota, \oo4 \iota)\leq C(\iota, \alpha) \text{
    for all $\alpha\in \sDm^1$},
  \end{align*}
  where $\iota$ and $0$ denote the identity and the constant $0$ function
  on $[0,1]$, respectively.
  Moreover, the inequality in \eqref{equ:16EF} is an equality if and only
  if $\PP=\PP^*$. Thus, to show that no admissible minimizer exists it will be
  enough to find a sequence $\seq{\PP}$ in $\sA$ such that $J(\PP_n)
  \downto J(\PP^*)$. This can be achieved easily by using the $\PP_0$-laws of
  $(A^n,L)$, where
  \begin{align*}
    A^n_t = \begin{cases} \tot, & t< \tfrac{1}{n},\\  \oo4 L_t , & t \in
    [\oo{n},1], \end{cases}
  \end{align*}
\end{example}

\subsection{A characterization result}
Using the same ingredients
as in the formulation of the monotone-follower problem,
we pose a
forward-backward-type stochastic equation (called the \define{Pontryagin FBSDE}),
as a formulation of the maximum principle of Pontryagin.
Whenever the Pontryagin FBSDE is involved, we automatically assume that both $a\mapsto h(l,a)$ and $a\mapsto g(l,a)$ are continuously differentiable in $a$ on $[0,\infty)^k$
for each $l$, and denote their gradients (in $a$) by $\nabla h $ and $\nabla g $, respectively. Any inequalities between multidimensional processes are to be understood componentwise.
\begin{definition}[The Pontryagin FBSDE]
  \label{def:FBSDE}
  A probability  $\tPP\in
  \prob^{d+2k}(L, A,Y)$
  is said to be a \define{weak solution} of the Pontryagin
  FBSDE if
  \begin{enumerate}[leftmargin=1.7em]
    \item $\tPP_{L,A}\in\sA$,
    \item $Y\geq 0$ and $\izT Y_t\, dA_t=0$, $\tPP$-a.s.
    \item $Y+\int_0^{\cdot} \nabla h (L_t,A_t)\, dt - f$ is an
      $(\FFF^{L,A,Y},\tPP)$-martingale with
      $Y_T = f(T)+\nabla g (L_T,A_T)$, $\tPP$-a.s.
  \end{enumerate}
\end{definition}
\begin{remark} Under $\tPP$ as above, $(L,A,Y)$ can be interpreted as a (weak)
solution to a fully-coupled stochastic forward-backward differential equation
with reflection. Indeed, the forward component $(L,A)$ feeds into the backward
component $Y$ directly (and through the terminal condition). On the other hand,
the backward component affects the forward component through the reflection term
in Definition \ref{def:FBSDE}, (2). The usual stochastic-representation
parameter $Z$ is hidden in our formulation (in the martingale property of $Y$ as
we do not assume the predictable-representation property in any form) and it
does not feed directly into the dynamics. For that reason, it would perhaps be
more appropriate to call (1)-(3) above a forward-backward stochastic equation
(FBSE) instead of FBSDE; we choose to stick to the canonical nomenclature,
nevertheless. \end{remark}

The main significance of the Pontryagin  FBSDE
lies in the following characterization:
\begin{theorem}[Characterization via the Pontryagin FBSDE]
\label{thm:max-equiv}
Suppose that the functions $g(l,\cdot)$ and $h(l,\cdot)$ are
convex and continuously differentiable  on $[0,\infty)^{k}$ for each $l\in\R^d$.
\begin{enumerate}
\item Suppose that there exist  Borel functions $\Phi_{g}, \Phi_h:\R^d \to [0,\infty)$ and
a  constant $M\geq 0$, such that
\begin{align*}
\int_0^T \Phi_h(L_t)\, dt + \Phi_g(L_T) \in \lone(\PP_L),
\end{align*}
and, for $\vp\in\set{g,h}$,
\begin{align*}
\abs{\nabla \vp(l,a)} & \leq \Phi_{\vp}(l)+ M\vp(l,a),\eforall (l,a)\in\R^{k}\times [0,\infty).
\end{align*}
Then each solution $\hPP$ of the monotone follower problem is an $( L,
A)$-marginal of some solution $\tPP$ to the Pontryagin FBSDE.
\item   If the Pontryagin FBSDE admits a solution $\tPP$, then its   marginal
$\tPP_{ L, A}$ is a solution of the monotone-follower problem whenever its
value is finite
\end{enumerate}
\end{theorem}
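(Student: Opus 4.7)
The proof has two distinct directions. Part (2), sufficiency, uses the FBSDE as a first-order optimality certificate and compares $\hPP := \tPP_{L,A}$ to an arbitrary competitor via convexity, integration by parts, and the reflection conditions. Part (1), necessity, builds $Y$ as a conditional-expectation adjoint process attached to an optimizer and reads off the reflection conditions from two families of variational perturbations.

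For (2), fix $\PP'\in\sA$ with $J(\PP')<\infty$. I would enlarge the canonical space to carry $(L,A,A',Y)$ with $(L,A,Y)\sim\tPP$, $(L,A')\sim\PP'$, and $A'$ conditionally independent of $(A,Y)$ given $\sF^L_T$. The conditional-independence form of admissibility (Definition \ref{def:adm-contr}) ensures that $M_\cdot := Y_\cdot + \int_0^\cdot \nabla h(L_s,A_s)\,ds - f$ remains a martingale in the enlarged filtration $\FFF^{L,A,A',Y}_+$. Convexity of $h(l,\cdot)$ and $g(l,\cdot)$ gives
\begin{align*}
J(\PP')-J(\hPP)\geq \EE\Big[\int_{[0,T]} f\cdot d(A'-A) + \int_0^T \nabla h(L_t,A_t)\cdot(A'_t-A_t)\,dt + \nabla g(L_T,A_T)\cdot(A'_T-A_T)\Big].
\end{align*}
A Fubini exchange combined with the algebraic identity $f(s)+\int_s^T \nabla h\,dt+\nabla g = Y_s + (M_T-M_s)$ (immediate from the definition of $M$) turns the right-hand side into $\EE[\int Y_s\,d(A'-A)_s] + \EE[\int (M_T-M_s)\,d(A'-A)_s]$; a product-rule/integration-by-parts computation shows the second summand equals $\EE[\int (A'_{s-}-A_{s-})\,dM_s] = 0$. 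The reflection conditions $Y\geq 0$ and $\int_0^T Y\,dA=0$, combined with $A'\in\sDm^k$, yield $\EE[\int Y\,dA']\geq 0$, and (2) follows.

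For (1), let $\hPP$ be a solution of the monotone-follower problem and define
\begin{align*}
Y_t := f(t)+\EE^{\hPP}\Big[\nabla g(L_T,A_T)+\int_t^T \nabla h(L_s,A_s)\,ds\,\Big|\,\sF^{L,A}_{t+}\Big],
\end{align*}
in a c\`adl\`ag version. The growth bound $|\nabla\vp|\leq \Phi_\vp+M\vp$ combined with $J(\hPP)<\infty$ and the integrability of $\int_0^T\Phi_h(L_t)\,dt+\Phi_g(L_T)$ supplies the required $L^1$-bounds. By construction $Y_T=f(T)+\nabla g(L_T,A_T)$ and $M:=Y+\int_0^\cdot\nabla h\,ds-f$ is an $\FFF^{L,A}_+$-martingale; since $Y$ is $\FFF^{L,A}_+$-adapted, $\FFF^{L,A,Y}_+=\FFF^{L,A}_+$, so all of Definition \ref{def:FBSDE} apart from the reflection follows, and $\tPP_{L,A}=\hPP\in\sA$. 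The reflection comes from perturbation. For $Y\geq 0$: given an $\FFF^{L,A}_+$-stopping time $\tau$ and a bounded nonnegative $\sF^{L,A}_{\tau+}$-measurable $\R^k$-valued $\zeta$, the control $A^\eps:=A+\eps\zeta\mathbf{1}_{[\tau,T]}$ is admissible for $\eps>0$; dividing $J(\PP^\eps)-J(\hPP)\geq 0$ by $\eps$ and sending $\eps\downarrow 0$ (justified by the monotonicity of convex difference quotients together with the $\Phi_\vp+M\vp$ bound and dominated convergence) yields $\EE[\zeta\cdot Y_\tau]\geq 0$, so $Y_\tau\geq 0$ almost surely, and then $Y\geq 0$ up to indistinguishability by an optional section argument together with right-continuity. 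For $\int Y\,dA=0$: the inward perturbation $A^\eps:=(1-\eps)A$, $\eps\in(0,1)$, is admissible, and the same linearization-plus-Fubini calculation as in Part (2) converts optimality into $\EE[\int Y_s\,dA_s]\leq 0$; combined with $Y\geq 0$ this forces $\int_0^T Y\,dA=0$ $\hPP$-a.s.

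The principal technical points are: (i) verifying that each perturbed control still satisfies the conditional-independence clause of Definition \ref{def:adm-contr}; (ii) justifying the $\eps\downarrow 0$ passage by dominated convergence using the $\Phi_\vp+M\vp$ bound; and (iii) in Part (2), controlling the enlargement of filtration so that $M$ remains a martingale and, hence, the integration-by-parts identity $\EE[\int (M_T-M_s)\,d(A'-A)_s] = 0$ holds. I expect (iii) to be the most delicate point, and it is precisely why admissibility is formulated via the Br\'emaud--Yor hypothesis $(\mathcal H)$ rather than by strong adaptedness.
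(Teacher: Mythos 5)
Your overall architecture matches the paper's: Part~(2) is proved by coupling $\tPP$ with a competitor $\PP'$ via the conditional-independence structure, observing that the martingale property of $Y+\int_0^\cdot\nabla h\,ds - f$ persists in the enlarged filtration, and then invoking the subgradient inequality together with the reflection conditions; and Part~(1) is proved by taking $Y$ to be the optional projection of the pathwise subgradient $\partial C(L,A)$ onto $\bar\FFF^{L,A}_+$ and extracting the reflection conditions from first-order perturbations of the optimal control. Your argument for $Y\geq 0$ (bounded nonnegative inward perturbations plus a dominated-convergence passage, which here can be underwritten by the Gronwall-type bound $|\nabla\vp(l,a+x)|\leq(\Phi_\vp(l)+\vp(l,a))e^{M|x|}$ since the perturbation is bounded) is essentially the paper's.

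There is, however, a genuine gap in your treatment of $\int_0^T Y_t\,dA_t=0$. You perturb via $A^\eps=(1-\eps)A$, i.e.\ the unbounded perturbation $\Delta=-A$, and assert that ``the same linearization-plus-Fubini calculation'' yields $\EE[\int Y\,dA]\leq 0$ after sending $\eps\downarrow 0$. But the passage $\eps\downarrow 0$ requires control of $\EE[\nabla\vp(L,A^\eps)\cdot A]$ uniformly in $\eps$, and the bound $|\nabla\vp|\leq\Phi_\vp+M\vp$ gives at best $|\nabla\vp(L,A^\eps)\cdot A|\leq(\Phi_\vp(L)+M\vp(L,A)+M\vp(L,0))|A|$, which involves the uncontrolled products $\vp(L,A)|A|$, $\Phi_\vp(L)|A|$ and the quantity $\vp(L,0)$; none of these is guaranteed to be in $L^1$ by the hypotheses $J(\hPP)<\infty$ and $\int\Phi_h(L)\,dt+\Phi_g(L_T)\in\lone$. (Likewise, the monotone-difference-quotient route requires integrability of $(\vp(L,A/2)-\vp(L,A))^+$, which is again controlled only by $\vp(L,0)$.) The paper sidesteps precisely this issue by restricting to the class $\sV_A$ of \emph{bounded} perturbations and using the truncated sequence $\Delta_n=-\tfrac12\min(A,n)$: boundedness of $\Delta_n$ makes the Gronwall estimate uniform in $\eps$, each $n$ yields $\int_0^T Y\,d(\min(A,n))=0$ a.s., and then one lets $n\to\infty$ using that $d\min(A,n)\upto dA$ and $Y\geq 0$. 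You should replace your unbounded perturbation by this truncated family (or supply an alternate uniform-integrability argument) to close the gap.
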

\begin{remark}\label{rem:exponential}\ 
\begin{enumerate}
\item Our Pontryagin FBSDE can be interpreted as a weakly-formulated version of 
(stochastic) first-order conditions. These can be found in the literature, in
settings similar to ours, and in the context of singular control, e.g., in \cite{BanRie01a},
         \cite{Ban05}, 
         or, more recently, 
         \cite{Ste12}).
\item
The condition in (1) above essentially states that $\vp$ grows no faster than an
exponential function, with the parameter uniformly bounded from above in $l$.
This should be compared to virtually no growth condition needed for
existence in Theorem \ref{thm:exist}, as well as to the polynomial growth
conditions needed for the approximation result in Theorem \ref{thm:approx}
below. 
\end{enumerate}
\end {remark}
While we will be using the Pontryagin FBSDE mostly as a tool in the proof of Theorem \ref{thm:approx}, we believe that the the following result, which is an immediate consequence of Theorems \ref{thm:exist} and \ref{thm:max-equiv} above merits to be mentioned in its own right. \begin{corollary}[Existence for the Pontryagin FBSDE]
  \label{cor:FBSDE}
  Under the combined assumptions of Theorems
  \ref{thm:exist} and \ref{thm:max-equiv}, part (1),
  the Pontryagin FBSDE admits a solution, as soon as the value of the monotone-follower problem is finite.
\end{corollary}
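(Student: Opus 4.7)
The plan is to simply chain the two theorems together, since the corollary is essentially an assembly statement with no new technical work required.

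First, I would invoke Theorem \ref{thm:exist}. The hypotheses of that theorem (linear coercivity \eqref{asm:coercive} on $C$) are among the combined assumptions, and we are told that the value $V$ of the monotone-follower problem is finite. Thus there exists a (strong, but I only need weak here) solution $\hPP\in\sA$ with $J(\hPP)=V<\infty$.

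Next, I would feed this $\hPP$ into Theorem \ref{thm:max-equiv}, part (1). The remaining combined assumptions are exactly the regularity and exponential-type growth conditions on $\nabla g$ and $\nabla h$ required there; in particular $g(l,\cdot)$ and $h(l,\cdot)$ are convex and continuously differentiable on $[0,\infty)^k$ for each $l$, and the bounds $|\nabla\vp(l,a)|\leq \Phi_\vp(l)+M\vp(l,a)$ hold for $\vp\in\{g,h\}$ with $\int_0^T \Phi_h(L_t)\,dt+\Phi_g(L_T)\in\lone(\PP_L)$. Theorem \ref{thm:max-equiv}(1) then produces a probability measure $\tPP\in\prob^{d+2k}(L,A,Y)$, a solution to the Pontryagin FBSDE in the sense of Definition \ref{def:FBSDE}, whose $(L,A)$-marginal is precisely $\hPP$. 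This $\tPP$ is the desired solution to the Pontryagin FBSDE.

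There is no main obstacle here beyond verifying that the hypotheses of the two theorems are compatible and simultaneously in force, which is automatic since they are bundled explicitly in the statement of the corollary. I would write the proof in two or three sentences, with the only substantive observation being that finiteness of $V$ is the only additional hypothesis beyond what the two theorems individually require, and it is precisely the trigger needed by Theorem \ref{thm:exist}.
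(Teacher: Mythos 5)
Your proof is correct and is exactly the chaining the paper has in mind; indeed, the paper states the corollary as "an immediate consequence of Theorems \ref{thm:exist} and \ref{thm:max-equiv}" and offers no further argument. Apply Theorem \ref{thm:exist} (finite value plus linear coercivity) to produce a solution $\hPP$ of the monotone-follower problem, then apply Theorem \ref{thm:max-equiv}(1) to realize $\hPP$ as the $(L,A)$-marginal of a Pontryagin FBSDE solution $\tPP$.
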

\begin{remark}\label{rem:uniqueness} We do not discuss uniqueness of solutions
in detail
either in the context of Theorem \ref{thm:exist} above, or in the context of our
other results below. In particular cases, clearly, the strong solution will be
unique if
enough strict convexity is assumed on the problem ingredients.
\end{remark}
\subsection{A connection with optimal stoppeng}\
\label{sub:os}
In our next result, we revisit, and, more importantly, reinterpret, the
celebrated connection between optimal stopping and stochastic control in
the context of the generalized monotone-follower problem 
in
dimension $k=1$. Our formulation of the optimal-stopping problem differs
slightly from the classical one, but is easily seen to be essentially
equivalent to it (we comment more about it below). It is chosen so as to
make our point - namely that the stopping problem associated to the
monotone-follower problem is but a manifestation of the maximum principle
of Pontryagin - more prominent.  It also follows our distributional
philosophy and we get to  reuse the framework (and the notion) of
admissible controls $\sA$ from Definition \ref{def:adm-contr}. 

Specifically, we
work on the path space $\cano(L,A)$ 
and,  assuming that the functions $g$
and $h$ are continuously-differentiable in $a$, with derivatives
denoted by $g_a$ and $h_a$, we define
  \begin{equation}
    \label{equ:stop-prob}
    \begin{split}
      K(\PP) = \EE^{\PP}\left[
      \Big(f(\tau_A) +  g_a (L_{\tau_A}, 0)+
      \int_{\tau_A}^{T} h_a (L_t,0)\, dt\Big)
      \inds{\tau_A<\infty}\right] \in \R,
    \end{split}
  \end{equation}
  where $\tau_A$ is the stopping time given
  by \begin{align*}
  \tau_A = \inf\sets{t\geq 0}{ A_t>0},  
  \text{ with } \inf\emptyset =
  +\infty,
  \end{align*}
  whenever 
  the expression
  inside the expectation in \eqref{equ:stop-prob}  above
  is in $\lone(\PP)$; the set of all such $\PP$ is denoted by $\sA_S$.
\begin{definition}
  A probability $\hPP\in\sA_S$
  is said to be  
   a \define{solution} of the optimal-stopping
  problem if $K(\hPP) \leq K(\PP)$ for all $\PP\in\sA_S$.
\end{definition}

\begin{remark}\
\label{rem:4C83}
Viewed in isolation, the above formulation of the optimal stopping problem
contains obvious redundancies (the $\PP$-behavior of $A$ after $\tau_A$,
for example). Even when  the class of the probability measures $\PP\in\sA$ is
further restricted so that $A$ becomes a single-jump $0$-to-$1$ process,
$\PP$-a.s., our formulation corresponds to a randomized
optimal stopping problem, in that $A$ is allowed to depend on innovations
independent of $L$. All in all, part (2) of Definition \ref{def:adm-contr}
makes the problem equivalent to a randomized  optimal stopping problem
with respect to the right-continuous augmentation of $\prf{\sF^Y_t}$.
There is no harm, however, since it  turns out
that, as usual in optimal stopping, randomization leads to no increase in value.
\end{remark}
\begin{theorem}[A connection between control and optimal stopping]
\label{thm:opst}
Suppose that $k=1$ and that
the assumptions
of
Theorem \ref{thm:max-equiv}, part (1), hold. Then
any solution
to the monotone-follower problem
is also a  solution to the optimal-stopping
problem. 
\end{theorem}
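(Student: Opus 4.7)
The plan is to combine the Pontryagin characterization from Theorem \ref{thm:max-equiv} with a layer-cake decomposition of the cost functional, which exhibits the monotone-follower problem as a continuum of optimal stopping problems indexed by a ``level'' $u \geq 0$; the limit $u \downarrow 0^+$ then recovers the stopping problem defined by $K$.

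First I invoke Theorem \ref{thm:max-equiv}(1) to lift $\hPP$ to a weak solution $\tPP$ of the Pontryagin FBSDE, with adjoint $Y \geq 0$ satisfying $\int_{[0,T]} Y_t\, d A_t = 0$ and
\begin{align*}
Y_t = f(t) + \EE^{\tPP}\!\Big[g_a(L_T, A_T) + \int_t^T h_a(L_s, A_s)\, ds \,\Big|\, \sF_t\Big].
\end{align*}
The complementary slackness pins $Y_{\tau_A} = 0$ on $\{\tau_A \leq T\}$ in the jump case, with the continuous case handled by a right-continuity argument for $Y$.

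Next, with $\tau^u_A := \inf\{t : A_t \geq u\}$, the layer-cake identities $\int_{[0,T]} f\, dA = \int_0^\infty f(\tau^u_A)\,\mathbf{1}_{\tau^u_A \leq T}\, du$, $h(L_t, A_t) - h(L_t, 0) = \int_0^{A_t} h_a(L_t, u)\, du$, and the analog for $g$, combine to give
\begin{align*}
C(L, A) - C(L, 0) = \int_0^\infty G(L, \tau^u_A, u)\,\mathbf{1}_{\tau^u_A \leq T}\, du,
\end{align*}
where $G(L, t, u) := f(t) + g_a(L_T, u) + \int_t^T h_a(L_s, u)\, ds$. Admissible $A$'s are in bijection with nondecreasing right-continuous families $(\tau^u)_{u \geq 0} \subset [0,T] \cup \{\infty\}$, so minimizing $J$ over $\sA$ reduces to minimizing $\int_0^\infty \EE^{\PP}[G(L, \tau^u, u)\mathbf{1}_{\tau^u \leq T}]\, du$ over such families. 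Convexity of $g, h$ in $a$ yields $G(L, t, u) - G(L, t', u) = f(t) - f(t') + \int_t^{t'} h_a(L_s, u)\, ds$ nondecreasing in $u$ for each $t < t'$, i.e., $G$ has decreasing differences in $(t, u)$; a monotone-comparison argument (in the spirit of Topkis) for the level-$u$ Snell envelopes then shows that the unconstrained optima $\tau^{u,*}$ are nondecreasing in $u$. Therefore the monotonicity constraint on $(\tau^u)_u$ is inactive and $\tau^u_{\hat A}$ is level-$u$ optimal for a.e.\ $u$.

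Finally, sending $u \downarrow 0^+$, right-continuity of $\hat A$ at $\hat\tau := \tau_{\hat A}$ gives $\tau^u_{\hat A} \downarrow \hat\tau$, and continuity of $g_a(l, \cdot), h_a(l, \cdot)$ combined with dominated convergence (justified by the exponential-growth hypothesis of Theorem \ref{thm:max-equiv}(1)) yields optimality of $\hat\tau$ for the stopping problem with reward $G(L, \cdot, 0)$. To match the stated form of $K$, which features $g_a(L_{\tau_A}, 0)$ in place of $g_a(L_T, 0)$, one uses admissibility condition (2) together with the tower property to condition on $\sF^L_{\tau_A +}$; the resulting correction is absorbed into the $\int_{\tau_A}^T h_a(L_s, 0)\, ds$ term via the martingale structure of the ``$A \equiv 0$'' analog of the FBSDE adjoint. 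The hardest point is the monotone-comparison statement for the family of Snell envelopes; once that is in place, the limiting and matching steps are routine.
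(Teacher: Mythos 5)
Your route is genuinely different from the paper's, which is remarkably short once the Pontryagin FBSDE is in hand: after invoking Theorem \ref{thm:max-equiv}(1), the paper couples a competitor $\PP'\in\sA$ with the Pontryagin solution $\tPP$ via Lemma \ref{lem:coupling}, uses the fact that $Y$ is the optional projection of $\da C(L,A)$ (equivalently, the martingale property of $Y+\int_0^\cdot h_a(L_t,A_t)\,dt - f$) to write $\EE[Y_{\tau_{A'}}\inds{\tau_{A'}<\infty}]=\EE[\da C(L,A)_{\tau_{A'}}\inds{\tau_{A'}<\infty}]$, and then combines the convexity inequalities $h_a(L,0)\leq h_a(L,A)$, $g_a(L_T,0)\leq g_a(L_T,A_T)$ with $Y\geq 0$ and the complementarity $\int_0^T Y_t\,dA_t=0$ to deduce $K(\PP')\geq\EE[\da C(L,0)_0-Y_0]$, with equality when $\PP'=\hPP$. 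No level-set decomposition and no comparison of Snell envelopes appears anywhere.

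Your layer-cake identity $C(L,A)-C(L,0)=\int_0^\infty G(L,\tau^u_A,u)\inds{\tau^u_A\le T}\,du$ is correct and reflects the classical Karatzas--Shreve / Bank--El Karoui viewpoint, so the overall philosophy is legitimate, but the proposal has a genuine gap at its center. The assertion that a ``Topkis-type monotone comparison'' of the level-$u$ Snell problems shows the unconstrained optima $\tau^{u,*}$ to be nondecreasing in $u$ (so that the monotonicity constraint on $(\tau^u)_u$ is inactive) is precisely the technical heart of the Bank--El Karoui representation theory and cannot be dispatched as a routine observation: one needs a selection or uniqueness argument to define $\tau^{u,*}$, a comparison principle for a \emph{continuum} of stopping problems rather than a finite-dimensional lattice argument, and a joint-measurability and adaptedness argument aggregating the family into an admissible $A$. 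Moreover, after invoking Theorem \ref{thm:max-equiv}(1) your chain of reasoning never actually uses the adjoint $Y$ --- the remark that $Y_{\tau_A}=0$ is not exploited downstream --- so what you have is two disconnected half-proofs spliced together. Filling in the Topkis step would amount to reproving a known and nontrivial representation theorem; the virtue of the paper's FBSDE-based argument is that it makes that machinery unnecessary.
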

\begin{remark}
As we do not use the notion of a value function,
there is no analogue of the equation (3.17) in Theorem 3.4, p.~862 in
\cite{KarShr84} about equality between the derivative (gradient) of the
value function in the control problem and the value of the optimal stopping
problem. The statements about the relationship between the optimal control
in the former and the optimal stopping time in the later translate directly
into our setting. The reader will see that the (short) proof of Theorem
\ref{thm:opst} below, given in subsection
\ref{sse:prstop}, it is nothing but a simple observation, once the
Pontryagin principle is established.
\end{remark}
\subsection{The approximation result}
\label{sub:cap}
In order to understand the monotone-follower problem better and to provide
an approach to it with computation in mind,
we pose a sequence of its ``capped'' versions. These play the role of natural regular approximands to the inherently singular monotone-follower problem.
The setting follows closely that of the
previous section. The only difference is that the set of allowed controls
consists only of Lipschitz-continuous nondecreasing processes, without the initial jump.
More precisely, we have the following definition:
\begin{definition}[Admissible capped controls]
  \label{def:adm-capped-contr}
  Given $n\in\N$,
  a probability $\PP\in\canp(L,A)$ is called \define{$n$-capped admissible},
  denoted by $\PP\in\sAwn$, if $\PP\in\sA$ and, $\PP$-a.s., the coordinate
  process $A$ is Lipschitz continuous with the Lipschitz constant at most
  $n$, and $A_0=0$, $\PP$-a.s.
  The \define{value} of the $n$-th capped
  problem is given by
  \begin{align*}
    \Vwn = \inf_{\PP\in\sAwn} J(\PP),
  \end{align*}
  and we say that the probability measure $\hPP\in\sAwn$ is the \define{weak
  solution} to the capped monotone-follower problem  if $\Vwn=J(\hPP)<\infty$.
\end{definition}
While Theorem \ref{thm:exist} relied on a minimal set of assumptions, the
approximation result we give below requires more structure. Here, $C_c^{
\infty} (\R^d)$
denote the set of all infinitely-differentiable functions on $\R^d$ with
compact support,
while $C_b(\R^d)$ refers to the set of all bounded continuous functions; $\ld$
denotes the Lebesgue measure on $[0,T]$.
\begin{theorem}[Approximation by regular controls]
\label{thm:approx}
Suppose that
\begin{enumerate}
  \item The law $\PP_0$ is Feller, in that for each $t\in [0,T)$
    \begin{enumerate}
      \item the $\sigma$-algebras $\sF^L_{t+}$ and $\sF^L_t$ on $\sD^d$ coincide $\PP_0$-a.s.
      \item for each $G\in C^{\infty}_c(\R^d)$, there  exists $G^*\in C_b(\R^d)$
      such that
        \begin{align*}
          \EE^{\PP_0}[ G(L_T)| \sF^L_{t+}] = G^*(L_t),\ \PP_0\text{-a.s.}
        \end{align*}
    \end{enumerate}
  \item The coordinate process $L$ is a quasimartingale under $\PP_0$
  \item The primitives $f,g$ and $h$ are regular enough, in that
    \begin{enumerate}
      \item
        each component of $f$ is uniformly bounded away from $0$,
       \item  the functions $g(\cdot,a)$ and $h(\cdot,a)$ are
        continuous for each $a\in [0,\infty)^k$.
      \item
        $h(l,\cdot)$,  and
        $g(l,\cdot)$ are continuously differentiable and convex on $[0,\infty)^{k}$ for each $l\in\R^d$, and
        there exist $p,q>1$
        and Borel functions $\Phi_g, \Phi_h:\R^d \to [0,\infty)$ with
        \begin{align*}
          \Phi_h(L) \in \lpee(\ld \otimes \PP_0 ) \eand
          \Phi_g(L_T) \in \lpee(\PP_0),
        \end{align*}
        such that, for $\vp\in\set{g,h}$, we have
        \begin{align*}
          \vp(l,0)+\abs{\nabla \vp (l,a)}  \leq \Phi_{\vp}(l) + \abs{a}^q,\text{ for all } (l,a)\in\R^{d+k}.
        \end{align*}
    \end{enumerate}
\end{enumerate}
Then
\begin{itemize}
  \item For each $n$, the capped problem
    admits a solution $\hPPn\in \sAwn$ and
    \begin{align*}
      \Vwn \downto V.
    \end{align*}
  \item
    A subsequence of the sequence $\sq{\hPPn}$
    converges in the Meyer-Zheng sense to a solution $\hPP$ of the monotone
    follower problem.
\end{itemize}
\end{theorem}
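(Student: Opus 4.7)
The plan is to produce a Meyer-Zheng convergent subsequence of the capped optimizers together with their Pontryagin-FBSDE witnesses and then identify the limit with a solution of Definition \ref{def:FBSDE}, so that Theorem \ref{thm:max-equiv}(2) delivers optimality for the original monotone-follower problem. For any $\PP\in\sAwn$ the Lipschitz-$n$ cap gives $\abs{A_t}\leq nT$, so (3)(c) yields $J(\PP)<\infty$ and in particular $\Vwn\leq J(0)<\infty$; assumption (3)(a) delivers the linear coercivity \eqref{asm:coercive}, and the Lipschitz-$n$ class is convex and stable under the Koml\'os-Kabanov convex-combination argument used to prove Theorem \ref{thm:exist}. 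Replaying that proof inside $\sAwn$ yields a minimizer $\hPPn$, and since $\sAwn\subseteq\sA^{[n+1]}\subseteq\sA$ the values $\Vwn$ form a nonincreasing sequence bounded below by $V$.

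\textbf{First-order conditions at level $n$.} Perturbing $A^{(n)}$ by $\eps(B-A^{(n)})$ with $B\in\sAwn$ and $\eps\in[0,1]$ keeps the perturbation inside the Lipschitz-$n$ class, so (3)(c) permits differentiation under $\EE^{\hPPn}$, and convexity of $h$ and $g$ yields the capped analogue of the Pontryagin FBSDE: an adapted process $Y^{(n)}$ with $Y^{(n)}_T=f(T)+\nabla g(L_T,A^{(n)}_T)$ such that $Y^{(n)}+\int_0^{\cdot}\nabla h(L_t,A^{(n)}_t)\,dt-f$ is a martingale under $\hPPn$, together with a two-sided complementary-slackness condition reflecting the constraint $0\leq\dot A^{(n)}\leq n$.

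\textbf{Tightness and extraction.} We apply the conditional-variation criterion in Appendix A to $(L,A^{(n)},Y^{(n)})$. The $L$-marginal is fixed and $L$ is a quasimartingale under $\PP_0$ by (2), hence Meyer-Zheng tight; $A^{(n)}$ is nondecreasing and, by \eqref{asm:coercive}, $\kappa\,\EE^{\hPPn}[\abs{A^{(n)}_T}]\leq J(\hPPn)+\kappa K\leq V^{[1]}+\kappa K$ uniformly in $n$, which controls its conditional variation. The process $Y^{(n)}$ equals a martingale with terminal datum $f(T)+\nabla g(L_T,A^{(n)}_T)$ minus the bounded-variation $\int_0^{\cdot}\nabla h(L_t,A^{(n)}_t)\,dt-f$, and (3)(c) dominates both by $\lpee$ quantities involving $\Phi_h(L)$, $\Phi_g(L_T)$, and $\abs{A^{(n)}_T}^q$, which are uniformly bounded in $\lone$. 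After extracting a subsequence, $(L,A^{(n)},Y^{(n)})$ converges in Meyer-Zheng to a limit $(L,\hat A,\hat Y)$ under a probability $\hPP$.

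\textbf{Identification of the limit and main obstacle.} Monotonicity of $\hat A$ is preserved at continuity points, the conditional-independence constraint transfers to $\hPP_{L,\hat A}$ with respect to $\sF^L_{t+}$ via the Feller hypothesis (1)(a), and the polynomial growth in (3)(c) combined with the continuity in (3)(b) gives the uniform integrability needed to pass to the limit in $J$, in $\int_0^{\cdot}\nabla h(L_t,A^{(n)}_t)\,dt$, and in $\nabla g(L_T,A^{(n)}_T)$. The Meyer-Zheng martingale-preservation property, applied after the right-continuous enlargement of the completed filtration (justified by (1)(a)), transfers Definition \ref{def:FBSDE}(3) from $Y^{(n)}$ to $\hat Y$; the flatness $\int_0^T\hat Y_t\,d\hat A_t=0$ arises in the limit because the upper multiplier in the capped slackness is supported on $\set{\dot A^{(n)}=n}$, a set whose contribution is dominated by $n^{-1}\EE^{\hPPn}[\abs{A^{(n)}_T}]\to 0$, leaving only the lower-bound slackness. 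Theorem \ref{thm:max-equiv}(2) then identifies $\hPP_{L,\hat A}$ as a solution of the monotone-follower problem, which forces $J(\hPP_{L,\hat A})\leq\liminf_n J(\hPPn)=\lim_n \Vwn\leq V$ and hence $\Vwn\downto V$. The main obstacle is this last step: Meyer-Zheng convergence determines the limit only up to pseudo-paths, so the martingale identity, monotonicity of $\hat A$, and complementary slackness initially hold for Lebesgue-almost every time only, and upgrading them to the pointwise, \cd{} statements of Definition \ref{def:FBSDE} is precisely where the Feller hypothesis (1) (making the right-continuous enlargement harmless) and the quasimartingale hypothesis (2) (supplying tight Meyer-Zheng control of $L$ and of the functionals appearing in the martingale piece) bear their weight.
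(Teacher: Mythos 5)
Your overall architecture matches the paper: pose capped problems over Lipschitz-$n$ controls, solve them, derive first-order conditions, pass to a Meyer-Zheng limit of $(L,A^{(n)},Y^{(n)})$, and identify the limit as a solution of the Pontryagin FBSDE so that Theorem~\ref{thm:max-equiv}(2) finishes the job. However, there is one genuine gap in the tightness step, and it is precisely the place where the paper works hardest.

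You write that the domination of the martingale part of $Y^{(n)}$ involves ``$\abs{A^{(n)}_T}^q$, which [is] uniformly bounded in $\lone$,'' and you attribute this to the coercivity estimate $\kappa\,\EE[\,\abs{A^{(n)}_T}\,]\leq J(\hat\PP^{(n)})+\kappa K$. But that estimate only controls $A^{(n)}_T$ in $\lone$, not in $\lque$ for $q>1$. Since the growth bound on $\nabla g$ and $\nabla h$ is of order $\abs{a}^q$ with $q>1$, the $\lone$ bound on $A^{(n)}_T$ gives you no control whatsoever on $\EE[\,\abs{\nabla g(L_T,A^{(n)}_T)}\,]$ and hence no bound on $\EE[\,\abs{M^{(n)}_T}\,]$; a mass of size $1/n$ at level $n$ keeps $\lone$ bounded while blowing up every higher moment. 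The paper closes this hole with a separate argument (its Proposition on relative compactness): it compares the optimal $A^{(n)}$ with the truncated control $A^{[n];r}_t=A^{(n)}_{t\wedge T^{[n]}(r)}$, uses sub-optimality together with the strict lower bound $c>0$ on $f$ (assumption 3(a)) and nonnegativity of $h,g$ to deduce $c\,\EE[(A^{(n)}_T-r)^+]\leq\EE[Y\,\mathbf 1_{\{A^{(n)}_T>r\}}]$ for a fixed $\lpee$ random variable $Y$, and then applies the Doob-type interpolation Lemma~\ref{lem:XY} to upgrade this to $\sup_n\lnorm{A^{(n)}_T}_p<\infty$ for every $p$. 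Without some version of this truncation-and-interpolation argument, your tightness claim for $\sq{M^{(n)}}$ does not go through.

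Two minor points of style rather than substance. First, for existence of the capped minimizer you propose replaying the Koml\'os--Kabanov convex-combination argument of Theorem~\ref{thm:exist} inside $\sAwn$; the paper instead exploits the $[0,n]^k$-boundedness of the control rates to run a clean Banach--Saks argument in $\ltp$. Both are correct, but the $\ltwo$ route avoids reopening the measure-convergence machinery. Second, your explanation of the passage from two-sided capped slackness to the limiting flatness $\int_0^T\hat Y\,d\hat A=0$ (``supported on $\{\dot A^{(n)}=n\}$, contribution $n^{-1}\EE[\abs{A^{(n)}_T}]\to 0$'') is heuristically in the right direction but does not match the actual mechanism: the paper first derives the identity $n\,\EE[\int_0^T(Y^{(n)}_t)^-dt]=-\EE[\int_0^T Y^{(n)}_t\,dA^{(n)}_t]$ and bounds it by $\EE[C(L,0)]$, giving $Y\geq 0$ in the limit, and then deduces $\int_0^T Y_t\,dA_t\leq 0$ separately from a comparison $\EE[C(L^{(n)},A^{(n)})]\geq\EE[C(L,A)]$ together with the subgradient inequality and integration by parts, not from a support argument. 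You would need to make this step precise to complete the identification.
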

\begin{remark}\label{rem:sufficient}\
\begin{enumerate}
\item
There are several slightly-different classes of processes found under the name
of a Feller process in the literature, so we make the essential properties
needed in the proof explicit in the statement. These particular properties are,
furthermore, implied by
all the definitions of the Feller property the authors have encountered.
Consequently,
all standard examples of Feller processes such as diffusions,
stable processes, L\' evy processes, etc., fall under our framework.
\item The quasimartingality assumption on $L$ is put in place mostly for
convenience. It is known that so-called ``nice'' Feller processes (the
domain of whose generator contains smooth functions with compact support) are
automatically special semimartingales and, therefore, local quasimartingales
(see \cite{Sch12} for the first part of the statement, and 
 \cite[Theorem 23.20, p.~451]{Kal02b} for the second). 
As no convexity in the variable $l$ is assumed, one can further do away with
the
localization in many cases by replacing $L$  by $q(L)$, where $q$ is
a smooth, injective and bounded function. Such a replacement would not
change the problem; indeed, conditions (1) and (3) of Theorem \ref{thm:approx}
are invariant under the transformation $L \mapsto q(L)$.
\item The growth assumptions on the functions $f$, $g$ and $h$ are essentially
those of \cite{KarShr84}, rephrased in our language. We note the fact that
$f$ is bounded away from zero immediately implies the linear coercivity
condition of Theorem \ref{thm:exist}, while the condition $\vp(l,0)\leq \Phi_
{\vp}(l)$, for $\vp\in \set{g,h}$, guarantees that the value is finite.
\end{enumerate}\end{remark}
\begin{example}
In general, the sequence of capped optimizers cannot be guaranteed to converge
towards a minimizer $\PP^*$ weakly, under the the Skorokhod topology. Indeed,
Skorokhod
convergence preserves continuity, and all capped optimal controls are
continuous, but it is easily seen that the solution to the
monotone-follower problem does not need to be a continuous process. Indeed,
it suffices to take $k=d=1$,  any $\PP_0$ with $\PP_0[L_T>1]>0$,
$f\equiv 1$, $h\equiv 0$ and  $g (l,a) = \tot (l-a)^2$, so
that the optimal $A$
is given by $A_t=0$ for $t<T$ and $A_T= \max(0,L_T-1)$.

On the other hand, if one can guarantee that the optimizer is continuous (and
$A_0=0$), the Meyer-Zheng convergence automatically upgrades to the weak
convergence in $C[0,T]$ (see \cite{Pra99}).
\end{example}
One of the immediate consequences of Theorem \ref {thm:approx}
is that the monotone-follower problem can be posed over Lipschitz controls, without affecting the value function.
\begin{corollary}[Lipschitz $\eps$-optimal controls]
  \label{cor:relax} Under the conditions of Theorem \ref{thm:approx}
  for each $\eps>0$ there exists $M>0$ and an
  $\eps$-optimal admissible control $\PP$, such that $A$ is uniformly
  $M$-Lipschitz, $\PP$-a.s.
\end{corollary}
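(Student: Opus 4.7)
The plan is to extract the sought Lipschitz $\eps$-optimal control directly from the sequence of optimizers produced in the first bullet of Theorem \ref{thm:approx}; no new analysis is required beyond verifying that the relevant hypotheses are in force and that the capped value converges down to the original value.

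First, I would check that the value $V$ of the monotone-follower problem is finite under the hypotheses of Theorem \ref{thm:approx}, as otherwise the notion of $\eps$-optimality is vacuous. This follows from the growth condition in part (3)(c): since $\abs{\nabla \vp(l,a)}\geq 0$, one has $\vp(l,0)\leq \Phi_\vp(l)$ for $\vp\in\set{g,h}$, and the integrability of $\Phi_h(L)$ against $\ld\otimes\PP_0$ and of $\Phi_g(L_T)$ against $\PP_0$, combined with the admissibility of the trivial control $A\equiv 0$ (under which the $f$-term in $C$ vanishes), gives $V\leq J(\PP_0\otimes \delta_0)<\infty$.

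Next, I would invoke Theorem \ref{thm:approx} to obtain, for each $n\in\N$, a solution $\hPPn\in\sAwn$ of the $n$-capped problem, with $\Vwn \downto V$. Given $\eps>0$, I pick $n$ large enough so that $\Vwn<V+\eps$ and set $\PP:=\hPPn$, $M:=n$. Since $\sAwn\subseteq\sA$, the probability $\PP$ is admissible; since $J(\PP)=\Vwn<V+\eps$, it is $\eps$-optimal in the sense of Definition \ref{def:mon-fol}; and, by the very definition of $\sAwn$, the coordinate process $A$ is $\PP$-a.s.\ Lipschitz continuous with constant at most $n=M$, which is the required uniform Lipschitz property.

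There is no substantial obstacle here: the corollary is essentially a repackaging of the first bullet of Theorem \ref{thm:approx}. The only point that deserves a line of verification is finiteness of $V$ (and hence meaningfulness of $\eps$-optimality), which is why I would spell it out explicitly from the growth assumption.
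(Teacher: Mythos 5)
Your proof is correct and is essentially the argument the paper has in mind: the corollary is presented as an immediate consequence of the first bullet of Theorem \ref{thm:approx} (the convergence $\Vwn\downto V$ together with existence of the capped minimizers $\hPPn\in\sAwn$, whose defining property is precisely $n$-Lipschitz continuity of $A$). Your added remark on finiteness of $V$ is fine, though the paper already notes this inside the proof of Theorem \ref{thm:approx} (the polynomial-growth assumption gives $\EE[C(L,A)]<\infty$ for any bounded control, hence $\Vwn<\infty$ and $V<\infty$), so it is a verification rather than a new idea.
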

\section{Proofs}
Proofs of our main results, namely Theorems \ref{thm:exist},
\ref{thm:max-equiv}, \ref{thm:opst} and \ref{thm:approx}  are collected in
this section.
The proof of each theorem
occupies a section of its own, and all the conditions stated in the theorem
are assumed to hold - without explicit mention - throughout the section.
\subsection{A proof of Theorem \ref{thm:exist}}
We start with an
auxiliary result which states that an admissible control
can always be turned into
a strong admissible control without any sacrifice in value.{ The central idea is
that, even though the optional projection of a nondecreasing process is
\emph{not necessarily
nondecreasing} in general, this turns out to be so in our setting. 
\begin{proposition}
  \label{pro:optional}
  For $\PP\in\sA$ with $\EE^{\PP}[A_T]<\infty$ let $\oA$ be the optional projection of $A$ onto the right-
  continuous and complete augmentation $\bar{\FFF}^L_+$
  of the natural filtration $\FFF^L$. Then the joint law $\oPP$ of $(L,\oA)$ is
  admissible and $J(\oPP)\leq J(\PP)$.
\end{proposition}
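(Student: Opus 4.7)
The plan is to construct the optional projection $\oA$ carefully, verify that it is itself a nondecreasing \cd{} process (the key nontrivial claim), show that the resulting pushforward $\oPP$ is strongly admissible, and finally establish the inequality $J(\oPP)\leq J(\PP)$ by combining a Fubini-type identity for the $f$-term with conditional Jensen inequalities for the $h$- and $g$-terms.

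First, since $\EE^{\PP}[A_T]<\infty$ and $A$ is nonnegative, the optional projection $\oA$ onto $\bar\FFF^L_+$ is well-defined and integrable. I would then show that the crucial conditional independence (hypothesis $(\mathcal H)$ with respect to the right-continuous augmentation) built into the admissibility of $\PP$ forces $\oA$ to be pathwise nondecreasing. Specifically, for $s\leq t$, admissibility gives $\EE^{\PP}[A_s\mid\sF^L_T]=\EE^{\PP}[A_s\mid\sF^L_{s+}]$ (since $\sF^A_s$ is conditionally independent of $\sF^L_T$ given $\sF^L_{s+}$, and $\sigma(\sF^L_{s+},\sF^L_T)=\sF^L_T$). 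Tower-projecting to $\bar\sF^L_{t+}$ then yields
\begin{equation*}
\oA_s=\EE^{\PP}[A_s\mid\bar\sF^L_{s+}]=\EE^{\PP}[A_s\mid\bar\sF^L_{t+}]\leq \EE^{\PP}[A_t\mid\bar\sF^L_{t+}]=\oA_t,\quad \PP\text{-a.s.},
\end{equation*}
using monotonicity of $A$. Choosing a right-continuous modification (available since the filtration satisfies the usual conditions and $\oA$ is already monotone in $t$ along rationals) gives a bona fide nondecreasing \cd{} version, which I continue to denote $\oA$; by construction $\oA_0=\EE^{\PP}[A_0\mid\bar\sF^L_{0+}]\geq 0$.

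For admissibility of $\oPP$, note first that $\oPP_L=\PP_L=\PP_0$ by construction. Since $\oA$ is $\bar\FFF^L_+$-adapted, the raw $\sigma$-algebra $\sF^A_t$ under $\oPP$ lies in $\bar\sF^L_{t+}$ up to $\oPP$-null sets; this immediately yields strong admissibility, and condition (2) of Definition \ref{def:adm-contr} holds trivially because any $\oPP$-a.s.\ representative of a set in $\sF^A_t$ lies in $\sF^L_{t+}$, making the factorization $\oPP[B\cap C\mid\sF^L_{t+}]=\oPP[B\mid\sF^L_{t+}]\oPP[C\mid\sF^L_{t+}]$ automatic.

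For the inequality $J(\oPP)\leq J(\PP)$, I would split the cost functional and handle each piece separately. For the Stieltjes piece, a Fubini-type computation using the fact that $t\mapsto \EE^{\PP}[\oA_t]$ and $t\mapsto \EE^{\PP}[A_t]$ agree (a defining property of the optional projection) yields the equality
\begin{equation*}
\EE^{\PP}\Big[\int_{[0,T]} f(t)\, dA_t\Big]=\EE^{\PP}\Big[\int_{[0,T]} f(t)\, d\oA_t\Big]=\EE^{\oPP}\Big[\int_{[0,T]} f(t)\, dA_t\Big],
\end{equation*}
since both mean functions generate the same Lebesgue-Stieltjes measure and $f$ is continuous. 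For the $h$- and $g$-terms, conditional Jensen applied pointwise in $t$ (with respect to $\bar\sF^L_{t+}$, using $\bar\sF^L_{t+}$-measurability of $L_t$ and convexity of $h(L_t,\cdot)$) gives $\EE^{\PP}[h(L_t,A_t)]\geq \EE^{\PP}[h(L_t,\oA_t)]$, and the same argument at time $T$ handles $g$. Integrating in $t$ and summing, one obtains $J(\PP)\geq J(\oPP)$. The main subtlety throughout is step two: ensuring that $\oA$ is nondecreasing, which is where the precise strength of the admissibility condition with the right-continuous $\sF^L_{t+}$ (as opposed to the raw $\sF^L_t$) is decisive.
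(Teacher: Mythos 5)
Your proof is correct and follows the same overall strategy as the paper's: use the conditional-independence requirement in Definition \ref{def:adm-contr} to show that $\oA$ is a.s.\ nondecreasing, observe that $\bar\FFF^L_+$-adaptedness gives strong admissibility, and then compare the three terms of $J$ separately. The only substantive difference lies in the technical devices: where you invoke a parameterized conditional Jensen inequality for the $h$- and $g$-terms, the paper essentially proves that same inequality from scratch by dualizing with the Fenchel conjugate $\tilde{\vp}(l,\cdot)$ and taking an essential supremum over bounded $\sF^L_T$-measurable test variables; and where you use a Fubini/Tonelli identity via the mean function $t\mapsto\EE^{\PP}[A_t]$ for the $f$-term, the paper uses a monotone class argument. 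Both devices are valid, so the proposal stands, though if you want the argument to be self-contained you may wish to either spell out why Jensen extends to $\EE[h(L_t,A_t)\mid\GG]\geq h(L_t,\EE[A_t\mid\GG])$ for $\GG$-measurable $L_t$ (e.g.\ exactly via the conjugate trick the paper uses) or cite a precise reference for this parameterized version.
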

\begin{proof}
  The optional projection of a \cd{}
  process onto a filtration satisfying the usual conditions is indistinguishable from a
  \cd{} process
  (see, e.g.,  Theorem 2.9, p.~18 in \cite{BaiCri09}).
  It is an immediate consequence of the condition
  (2) of Definition \ref{def:adm-contr} that
  \begin{align*}
    \EE^{\PP}[ A_t |\bFl_{t+}] = \EE^{\PP}[ A_t| \sF^L_T],
    \text{a.s., for all } t\in [0,T],
  \end{align*}
  and, so $\oA_t=\EE^{\PP}[A_t|\sF^L_T] \leq
  \EE^{\PP}[ A_s|\sF^L_T]=\oA_s$, a.s., for $s\leq t$.
  By construction, the $\sigma$-algebras $\bFl_{t+}$ and $\sF^L_{t+}$ differ
  only in $\PP$-negligible sets, and, so, $\bar{\sF}_T$ and $\bFl_{t+}$ are
  conditionally independent given $\sF^L_{t+}$, which, in turn,  implies  that the
  joint law of $(L,A)$ is admissible.

  Next, we show that  $J(\oA)\leq J(A)$.
    For $\vp \in \set{g,h}$ we denote by $\tilde{\vp}(l,\cdot)$ the convex
      conjugate (in the second variable) of $\vp$:
      \[ 
      \tilde{\vp}(l,\alpha) = \sup_{a\geq 0} \Big( \alpha a - \vp(l,a)
          \Big) \text{ so that }
  \vp(l,a) = \sup_{\alpha \in \R} \Big( \alpha a -
      \tilde{\vp}(l,\alpha)
      \Big).  \] Then, for any bounded $\sF^L_T$-measurable random variable
    $\alpha_T$ with $\tilde{\vp}(L_t, \alpha_T)<\infty$, $\PP$-a.s., we have
    \[ 
    \EE^{\PP}[ \vp(L_t,A_t) | \sF^L_T] \geq
    \EE^{\PP}[ \alpha_T A_t|\sF^L_T]
    - \tilde{\vp}(L_t,\alpha_T)  = \alpha_T \oA_t - \tilde{\vp}(L_t,\alpha_T),
    \text{ $\PP$-a.s.}\]
      The $\PP$-essential supremum of the right-hand side over all bounded
      $\sF^L_T$-measurable $\alpha_T$ is easily seen to be equal to
      $\vp(L_t,\oA_t)$, $\PP$-a.s., for
      $t\in [0,T]$, so, by the tower property, 
    $\EE^{\PP}[
      \vp(L_t, \oA_t) ] 
        \leq  \EE^{\PP}[ \vp(L_t,A_t)]$.
  Thus,
    \begin{align*} 
    \EE^{\PP}[ \int_0^T h(L_t,\oA_t)\, dt
      + g(L_T,\oA_T)] \leq \EE^{\PP}[ \int_0^T h(L_t,A_t)\, dt + g(L_T,A_T)].
      \end{align*} 
  
Finally, we let $\sM$ denote the set of all bounded
measurable functions $\psi:[0,T]\to \R$ with
\begin{equation}
  \label{equ:mcl}
    \EE[ \int_{[0,T]} \psi(t)\, d\oA_t]=\EE[\int_{[0,T]} \psi(t)\, dA_t].
\end{equation}
$\sM$ is clearly a monotone class which contains all functions of the form
$\psi(t) = \ind{(a,T]}(t)$, so, by the monotone-class theorem, it contains all bounded measurable functions and, in particular, $f$.
\end{proof}
Continuing with the proof of Theorem \ref{thm:exist}, we assume
that its value  is finite, pick a minimizing sequence $\seq{\PP}\subseteq\sA$,
and use it to build a probability space $(\Omega,\FF,\PP)$ and, on
it,  the sequence $L$, $A^{(1)}$, $A^{(2)}$, \dots, as in Lemma
\ref{lem:prob-space}.

Thanks to Proposition \ref{pro:optional}, we may assume, without loss of generality,
that all $\An$ are $\bar{\FFF}^L_+$-adapted, where
$\bar{\FFF}^{L}_+=\prf{\bFl_{t+}}$ denotes the
right-continuous and complete augmentation of the natural filtration $\prf{\sF^L_t}$,

Now that a common probability space has been constructed, 
we follow the
methodology of \cite{BanRie01a} and \cite{RieSu11}. 
Thanks to the linear coercivity condition \eqref{asm:coercive}, the sequence
$\sq{\An_T}$ is bounded in $\lone$;
also, all $\An$ are $\bar{\FFF}^L_+$-adapted, and $\bar{\FFF}^L_+$
is right-continuous.
Therefore, we can use 
Lemma 3.5, p.~470, in \cite{Kab99} to guarantee the existence of 
an $\bar{\FFF}^L_+$-adapted process
$B$, with paths in $\sD^d$ and  
a sequence $\sq{\Bn}$ of Ces\` aro means of a subsequence of
$\seq{\An}$ which converges to $B$ in the following sense (the sense of
    optional random measures):  
  for almost all $\omega$, the Stieltjes measures induced by
  $\Bn(\omega)$ converge weakly towards to the Stieltjes measure induced by
  $B(\omega)$. In particular, there exists a countable subset 
  $\sN$ of $[0,T)$ (the set of jumps of $t\mapsto \EE[B_t]$ on
    $[0,T)$) such that 
  \[ \int_0^T f(t)\, d\Bn_t \to \int_0^T f(t)\, dB_t,\text{ a.s.,}\ \text{ and \ }
\Bn_t \to B_t, \text{ a.s., for all $t\in [0,T]\setminus \sN$}.\]
  Therefore, 
  by Fatou's lemma (applied on $\Omega$ for the first and the third term, 
      and on the product space
      $[0,T]\times \Omega$ for the second), we have
    \begin{multline*}
\EE[ \int_0^T f(t)\, dB_t+\int_0^T h(L_t,B_t)\, dt + g(L_T,B_T)] \leq \\
  \leq \liminf_{n\to\infty}
\EE[ \int_0^T f(t)\, d\Bn_t+ \int_0^T h(L_t,\Bn_t)\, dt + g(L_T,\Bn_T)].
  \end{multline*}
For a nondecreasing \cd{} process $A$ on $(\Omega,\FF,\PP)$ we set $J(A) =
  \EE[ C(L,A)]$ and notice that the convexity of $J$ and the fact that
  $J(\An)\downto \inf_{\PP\in\sA} J(\PP)$  together yield
  that $\sq{\Bn}$ is a minimizing sequence, too, in that 
  $J(\Bn) \downto \inf_{\PP\in\sA} J(\PP)$. Therefore, $J(B) \leq
  \inf_{\PP\in\sA} J(\PP)$ and 
  it only remains to note that
  the law of $(L,B)$ is
  strongly admissible
  since 
  $B$ is $\bar{\FFF}^L_+$-adapted.

\subsection{A proof of Theorem \ref{thm:max-equiv}}
To streamline the presentation in this and the subsequent subsections,
we introduce additional notation:
the \define{subgradient} map $\da C(L,A): [0,T] \to \R^k$, at $(L,A) \in\can$,
is given  by
\begin{align*}
  \da C(L,A)_t =  f(t) + \int_{t}^{T} \nabla h (L_s,A_s)\, ds +
  \nabla g (L_T,A_T) \efor t\in [0,T],
\end{align*}
where, as usual,  $\nabla h $ and $\nabla g $ denote the gradients with respect to the
second variable. The reader will easily check that $\da C(L,A)$ has the
following property (which earns it the name subgradient):
\begin{equation}
  \label{equ:sbcx}
  \begin{split}
    C(L,A+\Delta) \geq  C(L,A) + \scl{\da C(L,A)}{\Delta},
  \end{split}
\end{equation}
for all $\Delta\in\sDf^k$ with $A+\Delta\in \sDm^k$, where
\begin{align*}
  \scl{X}{\Delta}=
  \int_{[0,T]} X_u\, d\Delta_u.
\end{align*}
We also note, for future reference and using integration by parts, that
\begin{align}
  \label{equ:intpart}
  \scl{\da C(L,A)}{ \Delta} = \int_{[0,T]} f(t)\, d\Delta_t
  + \int_0^T \nabla h (L_t,A_t) \Delta_t\, dt + \nabla g (L_T,A_T) \Delta_T,
\end{align}
for all $\Delta \in\sDf^k$.

We start the proof by assuming that $\hPP\in\prob^{d+k}(L,A)$ solves the
mo\-no\-to\-ne-follower problem, with value $V=J(\hPP)<\infty$. In particular, we have $C(L,A)\in\lone(\hPP)$.  To relieve
the notation we work on the sample space $\Omega=\can(L,A)$, under the
probability $\hPP$, until the end of this part of the proof.
Moreover, thanks to assumptions of the theorem,
for $\vp\in\set{g,h}$,
$l\in\R^k$, $a\in [0,\infty)^d$ and $x\in\R^d$ such that $a+x\in [0,\infty)^d$, we have, for each $c\in (0,1)$,
\begin{align*}
  \abs{\nabla \vp(l,a+c x)}&\leq \Phi_\vp(l)+ M \vp(l,a+ cx)\\ &=
  \Phi_\vp(l) + M \vp(l,a)+M \int_0^c \scl{x}{\nabla \vp(l, a+tx)}\, dt\\
  &\leq \Big( \Phi_\vp(l) + M \vp(l,a) \Big) + M \abs{x} \int_0^c \abs{\nabla \vp(l,a+tx)}\, dt
\end{align*}
Gronwall's inequality then implies that
\begin{align}\label{equ:gron}
  \abs{\nabla \vp(l, a+ x)} \leq \Big( \Phi_\vp(l)+ \vp(l,a) \Big) e^{M \abs{x}}.
\end{align}
Let $\sV_A$ denote the set of all bounded processes $\Delta$
with paths in
$\sDf^k$, adapted to the natural filtration $\FFF^{L,A}$
such that,
\begin{align*}
  \text{ either $\Delta\in \sDm^k$ or $\Delta = -\tot\min(A,n)$ for some
  $n\in\N$.}
\end{align*}
It has the property that for $\eps\in [0,1]$ and $\Delta\in\sV_A$, the joint law $\PP^{\eps}$ of $(L,\Ae)$,
where $\Ae = A+ \eps \Delta$,  is an
admissible probability measure in $\prob^{d+k}$. By the optimality of $A$ and \eqref{equ:sbcx}, we have
\begin{align*}
  \EE[ C(L,A)] \leq \EE[ C(L,\Ae)] \leq \EE[ C(L,A) +
  \scl{\da C(L,\Ae)}{\eps \Delta}],
\end{align*}
from where it follows that
\begin{align}\label{equ:variational}
  \scl{\da C(L,\Ae)}{\Delta})^-\in\lone \eand
  \EE[ \scl{\da C(L,\Ae)}{\Delta}] \geq 0,\text{
  for all $\eps \in [0,1]$. }
\end{align}
Thanks to boundedness of processes in $\sV_A$ and the fact that $C(L,A)$ is integrable,
the inequality \eqref{equ:gron} implies that the family
\begin{align*}
  \Bsets{ \scl{\da C(L_t, \Ae_t)}{\Delta}}{ \eps \in [0,1]} \text{
  is uniformly integrable for all }\Delta\in\sV_A.
\end{align*}
Moreover,
both $\nabla h $ and $\nabla g $ are continuous, so
\begin{align*}
  \lim_{\eps \to 0} \scl{\da C(L,\Ae)}{\Delta} = \scl{\da C(L,A)}{\Delta},\as
\end{align*}
It follows that we can pass to the limit as $\eps\to 0$ in \eqref{equ:variational} to conclude that
\begin{equation}
  \label{equ:1A5B}
  \begin{split}
    \EE[ \scl{ \da C(L,A)}{ \Delta}] \geq 0, \text{ for all } \Delta\in\sV_A,
  \end{split}
\end{equation}
and, consequently, that
\begin{align}
  \label{equ:Y-Del}
  \EE[ \scl{ Y}{\Delta}] \geq 0, \text{ for all } \Delta\in\sV_A,
\end{align}
where
$Y$ denotes the optional
projection of $\da C(L,A)$ onto the
right-continuous and complete augmentation $\bar{\FFF}^{L,A}_+$
of $\FFF^{L,A}$.
Since $\da C(L,A)$ is \cd{}, the process $Y$ can be
chosen in a \cd{} version, too (see
Theorem 2.9, p.~18~in \cite{BaiCri09}). Hence,
by varying $\Delta$ in the class of nondecreasing processes in $\sV_A$,
we can conclude that
$Y_t\geq 0$, for all $t\in [0,T]$, a.s.

On the other hand if we use each element of the
sequence $\Delta_n = - \tot \min(A,n)$ in \eqref{equ:Y-Del}, we obtain
\begin{align*}
  \int_{[0,T]} Y_t\, dA_t = 0, \as.
\end{align*}
In order to show that
the law $\tPP$  of the triple $(L,A,Y)$
solves the Pontryagin FBSDE, we only need to argue that $Y+\int_0^{\cdot} \nabla h (L_t,A_t)\, dt - f$ is an $\FFF^{L,A,Y}$ martingale
(under $\tPP$, on $\sD^{d+2k}$). This follows directly from the fact that $Y$ is a \cd{} version of the optional projection of $\da C(L,A)$ onto $\FFF^{L,A,Y}$.

\bigskip

Conversely, let $\tPP\in\prob^{d+2k}( L, A, Y)$ be a
solution to the Pontryagin FBSDE.  To prove that $\hPP=\tPP_{ L, A}$ is a
weak minimizer in the monotone-follower problem, we pick a competing
admissible measure $\PP' \in \sA$.
Using Lemma \ref{lem:coupling},
we construct the measure $\PP=\tPP\otimes \PP'$ on
$\sD^{d+3k}$ (with coordinates $(L,A,Y,A')$). Since
$\PP_{(L,A,Y)}$ solves the Pontryagin FBSDE,
$Y+\int_0^{\cdot} \nabla h (L_t,A_t)\, dt - f$ is an
$(\FFF^{L,A,Y},\PP)$-martingale. Moreover, the $L$-conditional independence between $A'$ and $(A,Y)$ implies that it is also an
$(\FFF^{L,A,Y,A'},\PP)$-martingale. Consequently, we have
\begin{align*}
  \eeq{\PP}{ \scl{\da C(L,A)}{A'}}=
  \eeq{\PP}{ \scl{Y}{A'}}
  \eand
  \eeq{\PP}{ \scl{\da C(L,A)}{A}}=
  \eeq{\PP}{ \scl{Y}{A}}.
\end{align*}
The subgradient identity \eqref{equ:sbcx} then implies that
\begin{equation}
  \label{equ:3A1B}
  \begin{split}
    J(\PP') &=\eeq{\PP}{C(L,A')}\geq
    \eeq{\PP}{ C(L,A) + \scl{\da C(L,A)}{ A'-A}} \\
    &= J(\hPP) +\eeq{\PP}{\scl{Y}{A'-A}}  = J(\hPP) + \eeq{\PP}{\scl{Y}{A'}}\geq J(\hPP).
  \end{split}
\end{equation}

\subsection{A proof of Theorem \ref{thm:opst}} \label{sse:prstop}
Let $\hPP\in\sA$ be a solution to the monotone-follower problem. By Theorem
\ref{thm:max-equiv}, part (1), it can be realized as the marginal $\tPP_{L,A}$ of some solution $\tPP_{L,A,Y}$ of
the Pontryagin FBSDE.
For an
admissible measure $\PP' \in \sA$, and
using Lemma \ref{lem:coupling},
we can construct the measure $\PP=\tPP\otimes \PP'$ on
$\sD^{d+3}_{\cdot \uparrow \cdot \uparrow}$ (with coordinates $(L,A,Y,A')$)
and work on $\sD^{d+3}_{\cdot \uparrow \cdot \uparrow}$ under $\PP$ for the remainder of the proof.
As argued in the previous subsection, the process
$Y+\int_0^{\cdot}  h_a (L_t,A_t)\, dt - f$ 
is an
$(\FFF^{L,A,Y,A'},\PP)$-martingale, and, so,
\begin{align*}
  \EE[Y_{\tau_{A'}}
  \inds{\tau_{A'}<\infty}]
  =
  \ee{ \da C(L,A)_{\tau_{A'}} \inds{\tau_{A'}<\infty}}
\end{align*}
where $\tau_{A'}=\inf\sets{t\geq 0}{A'_t>0}\in [0,T]\cup\set{\infty}$.
By the assumptions of convexity 
we placed on
$h $ and $g$, we have the  following
inequalities 
\begin{align*}
   h_a ( L_s,0)-  h_a ( L_s,A_s)\leq 0 \eand
   g_a (L_T,0)-  g_a (L_T,A_T)\leq 0,
\end{align*}
for all $s\in [0,T]$, a.s.
Therefore, by the nonnegativity of $Y$, we have
\begin{align*}
  K(\PP') & =
  \EE[ \da C(L,0)_{\tau_{A'}} \inds{\tau_{A'}<\infty}]
  \geq  \EE[ \da C(L,0)_{\tau_{A'}} \inds{\tau_{A'}<\infty} - Y_{\tau_{A'}}
  \inds{\tau_{A'}<\infty}]\\
  &=
  \EE\Big[ \int_{\tau_{A'}}^T \big( h_a (L_s,0) -
   h_a (L_s,A_s) \big)\, ds+
  \Big( g_a (L_T,0) -  g_a (L_T,A_T)\Big)\inds{\tau_{A'}<\infty}\Big]\\
  &\geq \ee{ \int_{0}^T \big( h_a (L_s,0) -
   h_a (L_s,A_s) \big)\, ds +
  \Big(g_a (L_T,0) -  g_a (L_T,A_T)\Big) }\\
  &=
  \EE[ \da C(L,0)_0 - Y_0]
\end{align*}
On the other hand, if we repeat the computation above with $\tau_{A'}$
replaced by $\tau_{A}$, all the inequalities become equalities, implying
that $K(\PP) \leq K(\PP')$. Indeed, we clearly have
\begin{align*}
   h_a (L_s,0) =  h_a (L_s,A_s), \text{ on } \set{s<\tau_{A}},
\end{align*}
and
\begin{align*}
   g_a (L_T,0) =  g_a (L_T,A_T)\text{ on } \set{\tau_A=\infty},
\end{align*}
as well as
\begin{align*}
  \EE[ Y_{\tau_A} \inds{\tau_A<\infty}] =0,
\end{align*}
where this last equality follows from the fact that $\int_0^T Y_u\, dA_u=0$.
\subsection{A proof of Theorem \ref{thm:approx}}
\label{sse:proof-approx}
We start by posing the capped monotone-follower problems on a common fixed probability
space $(\Omega,\FF,\PP)$ which hosts  a \cd{} process $L$ with
distribution $\PP_L$, and  consider only right-continuous and complete augmentation
$\bar{\FFF}^L_+$ of the natural filtration $\FFF^L$, generated by $L$.
Let $\sun$ denote the set of all
progressively-measurable $k$-dimensional processes with values in $[0,n]^k$.
For $u\in\sun$, all components of the process $A=\int_0^{\cdot} u(t)\, dt$
are Lipschitz continuous with the Lipschitz constant
not exceeding $n$. Conversely, each adapted process with such Lipschitz
paths admits  a similar representation.  This correspondence allows us to
pose the $n$-th capped monotone follower problem either over the set of process
$\sun$ or over the appropriate admissible set
$\sAn=\sets{\int_0^{\cdot} u(t)\, dt}{u\in\sun}$.
Their (strong) value functions are then defined by
\begin{equation} \label{equ:Vn}
  \Vn = \inf_{A\in \sAn} \ee{C(L,A)} = \inf_{u\in\sun} J(u)\ewhere
  J(u)=\ee{C(L,\textstyle\int_0^{\cdot} u)}.
\end{equation}
Each $A\in\sAn$ is $\bFl_+$-adapted and, therefore, strongly admissible, in the
sense of Definition
\ref{def:adm-capped-contr}.
In particular, $\Vn \geq V$, for all $n$. Also, noting that the  polynomial-growth assumption implies that $\EE[ C(L,A)]<\infty$, for  each bounded $A$,
we have $ \Vn<\infty$, for all $n\in\N$, and, consequently, $V<\infty$.

For readability, we split the remainder of the proof into several subsections.
\subsubsection{Existence in the prelimit}
Let $\ltp$ denote the space of all ($\ld\otimes\PP$-equivalence classes)
of $\bar{\FFF}^L_+$-progressively-measurable processes $u$
on $[0,T]\times \PP$ with
\begin{align*}
  \norm{u}_{\ltp}=\ee{\textstyle\izT \abs{u(t)}^2\, dt}^{1/2}<\infty.
\end{align*}
\begin{proposition}
  \label{pro:capped-minimizer}
  The infimum in \eqref{equ:Vn} is attained at some
  $\un\in\sun$.
\end{proposition}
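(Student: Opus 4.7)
The plan is to exploit the Hilbert-space structure of $\ltp$ together with convexity of $J$. The set $\sun$ is a convex, strongly closed, bounded (in $\linf$ and hence in $\ltp$) subset of $\ltp$, so by reflexivity it is weakly sequentially compact. I would start with a minimizing sequence $\seq{u_m}\subseteq \sun$, and, after passing to a subsequence, secure a weak limit $u_m \wto u^*$ in $\ltp$. Since each $u_m(t,\omega)\in [0,n]^k$ a.e.\ and this constraint defines a closed convex subset of $\ltp$, Mazur's lemma yields $u^*(t,\omega)\in[0,n]^k$ a.e.; progressive measurability is similarly preserved as a closed linear constraint. Hence $u^*\in\sun$.

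The main obstacle is passing to the limit inside the nonlinear terms $h(L_t,\cdot)$ and $g(L_T,\cdot)$, where weak $\ltp$-convergence is not enough on its own. To handle this, I would apply Mazur's lemma: for convex combinations $\tilde u_m = \sum_{k\ge m}\lambda_k^m u_k$, one has $\tilde u_m \to u^*$ strongly in $\ltp$, and, after extracting a further subsequence, $\tilde u_m \to u^*$ $\ld\otimes\PP$-a.e. By Fubini, for $\PP$-a.e.\ $\omega$ this convergence holds for $\ld$-a.e.\ $s\in[0,T]$; bounded convergence on $[0,t]$ (all integrands uniformly bounded by $n$) then gives, with $\tilde A_m = \int_0^\cdot \tilde u_m$ and $A^* = \int_0^\cdot u^*$,
\begin{align*}
\tilde A_m(t) \to A^*(t) \text{ for every } t\in[0,T],\quad \PP\text{-a.s.}
\end{align*}

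With this pointwise convergence in hand, continuity of $h(l,\cdot)$ and $g(l,\cdot)$ (condition (3b) and (3c)) gives $h(L_t,\tilde A_m(t))\to h(L_t,A^*(t))$ a.s.\ for each $t$, and similarly at $T$ for $g$. The polynomial growth condition (3c), integrated from $0$, yields a bound of the form $\vp(l,a)\le \Phi_\vp(l)(1+|a|) + c_q|a|^{q+1}$ for $\vp\in\{g,h\}$, which, combined with the uniform bound $|\tilde A_m(t)|\le nT$ and $\Phi_g(L_T),\Phi_h(L)\in\lpee$, provides a dominating integrable envelope. Dominated convergence then delivers
\begin{align*}
J(\tilde u_m) \to J(u^*),
\end{align*}
where the linear term $\EE[\int_0^T f(t)\tilde u_m(t)\,dt]$ is treated identically.

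To close, I would use convexity of $J$ in $u$ (the map $u\mapsto \int_0^\cdot u$ is linear, $f\cdot u$ is linear in $u$, and $h(l,\cdot),g(l,\cdot)$ are convex), which yields $J(\tilde u_m)\le \sup_{k\ge m}J(u_k)\to \Vn$. Combined with $J(\tilde u_m)\to J(u^*)$, this gives $J(u^*)\le \Vn$, and since $u^*\in\sun$ we must have $J(u^*)=\Vn$, so $\un:=u^*$ is the sought minimizer.
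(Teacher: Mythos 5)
Your proof is correct and follows essentially the same route as the paper's: the standard direct method, exploiting convexity of $J$ and the Hilbert-space structure of $\ltp$ to pass from a minimizing sequence to a strongly convergent sequence of convex combinations. The only (cosmetic) differences are that the paper invokes Banach--Saks to get the strongly convergent Cesàro means and then uses Fatou's lemma for mere lower semicontinuity, whereas you invoke weak sequential compactness plus Mazur's lemma and then use dominated convergence (with the polynomial-growth envelope) to get full convergence $J(\tilde u_m)\to J(u^*)$; both are valid and amount to the same idea.
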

\begin{proof}
  We proceed in the standard way, using the so-called ``direct method''.
  Let $\seqk{u}\subset \sun$ be a minimizing sequence, i.e., $J(u_k)\downto \Vn$.
  Since $\sun$ is bounded in $\ltp$, the Banach-Sachs theorem implies that we can extract a subsequence whose Ces\' aro sums
  (still denoted by $\seqk{u}$) converge strongly towards some
  $\un\in \ltp$. Furthermore, given that $\sun$ is closed and convex, we have
  $\un\in\sun$, as
  well. Thanks to the convexity of $J$, which is inherited
  from $C$, $\seqk{u}$ remains a minimizing sequence. Hence, to show that
  $\un$ is
  the minimizer, it will be enough to
  establish lower semicontinuity of $J$ on $\sun$ which is, in turn, a direct
  consequence of Fatou's lemma.
\end{proof}

\subsubsection{A version of the Pontryagin FBSDE}
Having established the existence in the (strong) capped monotone follower
problem, for each $n\in\N$ we pick and fix a minimizer $\un$  as in Proposition
\ref{pro:capped-minimizer} and turn to a capped version of the
Pontryagin FBSDE. We state it in a very weak form
(namely, as Proposition \ref{pro:206B})
which will, nevertheless
suffice to establish the validity of the full Pontryagin FBSDE in the
limit. The following notation will be used throughout:
\begin{align*}
  \begin{aligned}
    \Awn_t & = \int_0^t \un_s\, ds,            &
    \Nwn_t & =\izt \nabla h ( L_s, \Awn_s)\,ds &
    \Fwn_t &= \int_0^t f(s)\, d\Awn_s,
  \end{aligned}
\end{align*}
as well as
\begin{align*}
  \begin{aligned}
    \Mwn_t & =\Beec{\nabla g (L_T,\Awn_T)+\Nwn_T}{\bFl_{t+}}, &
    \Ywn_t &= f(t) + \Mwn_t - \Nwn_t,
  \end{aligned}
\end{align*}
all taken in their \cd{} versions. We note immediately that, thanks to the
polynomial-growth condition, all the integrals above are well
defined, and that $\Ywn$ is the optional projection of $\da C_A(L,\Awn)$ onto
$\bar{\FFF}^L_+$.
\begin{proposition}
  \label{pro:206B} For $n\in\N$, we have
  \begin{equation}
    \label{equ:21DA}
    \begin{split}
      n \ee{ \izT (\Ywn_t)^-\, dt} =
      -\ee{\izT \Ywn_t\, d\Awn_t}
    \end{split}
  \end{equation}
  and
  \begin{equation}
    \label{equ:21DB}
    \begin{split}
      \lim_{n\to\infty} \ee{ \int_0^T (\Ywn_t)^-\, dt} =0.
    \end{split}
  \end{equation}

\end{proposition}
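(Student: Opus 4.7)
The plan is to derive \eqref{equ:21DA} from the first-order optimality of $\un$ in the capped problem, and then to obtain \eqref{equ:21DB} by combining \eqref{equ:21DA} with an \emph{a priori} lower bound on $\ee{\int_0^T \Ywn_t\, d\Awn_t}$ coming from the convexity of $g$ and $h$.

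For \eqref{equ:21DA}, I would first compute the Gateaux derivative of $J$ at $\un$ along the direction $v-\un$ for $v\in\sun$. The polynomial-growth hypothesis (3)(c) together with the uniform path bound $\Awn_t\leq nT$ allows interchange of $\tpds{\eps}$ and $\ee{\cdot}$ via dominated convergence, and Fubini's theorem rewrites this derivative as $\ee{\int_0^T (v_t-\un_t)\cdot\da C(L,\Awn)_t\, dt}$. Since $\Ywn$ is (by construction) a \cd{} version of the optional projection of $\da C(L,\Awn)$ onto $\bar{\FFF}^L_+$, and $\un,v$ are both $\bar{\FFF}^L_+$-progressively measurable, the tower property turns the minimality of $\un$ into $\ee{\int_0^T \Ywn_t(v_t-\un_t)\, dt}\geq 0$ for every $v\in\sun$. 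Testing with $v_t=n\inds{\Ywn_t<0}$, combined with the trivial pointwise bound $\un_t\Ywn_t\geq -n(\Ywn_t)^-$, then pins down $\ee{\int_0^T \un_t\Ywn_t\, dt}=-n\,\ee{\int_0^T (\Ywn_t)^-\, dt}$, which is \eqref{equ:21DA} since $d\Awn_t=\un_t\, dt$.

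For \eqref{equ:21DB}, I would expand $\ee{\int_0^T \Ywn_t\, d\Awn_t}=\ee{\int_0^T (f(t)+\Mwn_t-\Nwn_t)\un_t\, dt}$ and evaluate the three terms. Using Fubini, the martingale property of $\Mwn$, and the adaptedness of $\un_t$ gives $\ee{\int_0^T \Mwn_t\un_t\, dt}=\ee{\Mwn_T\Awn_T}=\ee{(\nabla g(L_T,\Awn_T)+\Nwn_T)\cdot\Awn_T}$, while a direct Fubini swap yields $\ee{\int_0^T \Nwn_t\un_t\, dt}=\ee{\Nwn_T\Awn_T}-\ee{\int_0^T \Awn_s\cdot\nabla h(L_s,\Awn_s)\, ds}$. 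The $\Nwn_T\Awn_T$ contributions cancel, leaving $\ee{\int_0^T \Ywn_t\, d\Awn_t}$ equal to the sum of $\ee{\int_{[0,T]} f\, d\Awn}$, $\ee{\nabla g(L_T,\Awn_T)\cdot\Awn_T}$, and $\ee{\int_0^T \Awn_s\cdot\nabla h(L_s,\Awn_s)\, ds}$. The subgradient inequality $\nabla\vp(l,a)\cdot a\geq\vp(l,a)-\vp(l,0)$ for $\vp\in\{g,h\}$ then lower-bounds this sum by $J(\un)-J(0)=\Vwn-J(0)\geq V-J(0)$, with $J(0)<\infty$ by the polynomial-growth hypothesis (cf.\ Remark \ref{rem:sufficient}(3)). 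Combining with \eqref{equ:21DA} gives $n\,\ee{\int_0^T (\Ywn_t)^-\, dt}\leq J(0)-V$, which proves \eqref{equ:21DB}.

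The main obstacle I anticipate is the justification of the Gateaux differentiation in step one: I need a uniformly integrable envelope for the difference quotients $\eps^{-1}\big[C(L,\Awn+\eps\int_0^{\cdot}(v-\un))-C(L,\Awn)\big]$ valid for $\eps\in(0,1]$. The bound $\abs{\nabla\vp(l,a)}\leq\Phi_\vp(l)+\abs{a}^q$, combined with $\Phi_\vp(L)\in\lpee$ and the uniform path bound $\abs{\Awn_t+\eps\int_0^t(v-\un)}\leq 2nT$, supplies such an envelope and delivers dominated convergence; the remaining steps are routine applications of Fubini, the martingale property, or convexity.
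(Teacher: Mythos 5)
Your proof is correct and follows essentially the same route as the paper's: the first-order (variational) condition for the capped problem, tested against $v=n\,\inds{\Ywn<0}$, yields \eqref{equ:21DA}, and the convexity comparison of $C(L,\Awn)$ with $C(L,0)$ combined with \eqref{equ:21DA} and nonnegativity of the cost yields \eqref{equ:21DB}. The only difference is presentational: where the paper applies the abstract subgradient inequality \eqref{equ:sbcx} with $\Delta=-\Awn$ together with the optional-projection identity in one line, you unwind it into explicit Fubini, integration-by-parts, and martingale computations and apply $\nabla\vp(l,a)\cdot a\geq\vp(l,a)-\vp(l,0)$ term by term for $\vp\in\{g,h\}$ — the same mechanism, just expanded.
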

\begin{proof}
  Given $v\in\sun$ and $\eps \in [0,1]$ we set
  $B=\int_0^{\cdot} v_t\, dt$ and
  define
  \begin{align*}
    \Ae =\Awn+\eps(B-\Awn)\in\sAn
  \end{align*}
  Since $C(L,\Awn)\in\lone$, the optimality of $\un$ implies
  that
  \begin{align*}
    0 \geq \EE[ C(L,\Awn)] - \EE[ C(L,\Ae)] \geq \eps\EE[ \scl{
    \da C(L,\Ae )}{\Awn-B}],
  \end{align*}
  We let $\eps \downto 0$ and use the dominated convergence theorem to conclude that
  \begin{align}
    \label{equ:var}
    \begin{split}
    \ee{\izT  (\Ywn_t)^+ (\un_t-v_t)\ dt} \leq
    \ee{\izT  (\Ywn_t)^- (\un_t-v_t)\ dt}.
    \end{split}
  \end{align}
  Setting $v= n \inds{\Ywn\leq 0}$ yields
  \begin{align}
    \label{equ:var-2}
    \begin{split}
    \ee{\izT  (\Ywn_t)^+ \un_t\ dt} &\leq
    \ee{\izT  (\Ywn_t)^- (\un_t-n )\ dt}. \\
    \end{split}
  \end{align}
  Since the left-hand side of \eqref{equ:var-2} is nonnegative and the
  right-hand side nonpositive, we conclude that both of them vanish, which,
  in turn, directly implies \eqref{equ:21DA}.

  To show \eqref{equ:21DB} we use the inherited subgradient property of $\Ywn$ and \eqref{equ:21DA} to obtain
  \begin{equation*}
    \begin{split}
      0 &\leq  \ee{ C(L,\Awn)} \leq \ee{ C(L,0) } +  \ee{\int_0^T \Ywn_u\, d\Awn_u} \\ &=
      \ee{C(L,0)} - n \ee{ \izT (\Ywn_t)^-\, dt}.\qedhere
    \end{split}
  \end{equation*}
\end{proof}


\subsubsection{Relative compactness in the Meyer-Zheng topology}

Our next step is to pass to the limit, as $n\to\infty$, in the Meyer-Zheng
convergence and show that the limiting law satisfies the weak FBSDE
\eqref{def:FBSDE}.  The reader will find a short recapitulation of the pertinent known
results on the Meyer-Zheng convergence (minimally modified to fit our needs)
in subsections \ref{sse:pseudopath}, \ref{sse:meyer-zheng} and
\ref{sse:compactness} of Appendix \ref{app:Appendix}.

In the sequel,
$\sq{\tPn}$ denotes the sequence of laws of the  triplets $(L, \Awn, \Mwn)$ on $\sD^{d+2k}$.
\begin{proposition}
  \label{pro:rel-com}    For each $p\geq 1$, we have
  \begin{align}
    \label{equ:lpbound}
    \sup_n \bnorm{\Awn_T}_{\lpee}<\infty,
  \end{align}
  and    the sequence $\sq{\tPn}$ is relatively
  compact in the Meyer-Zheng topology on $\prob^{d+2k}$.
\end{proposition}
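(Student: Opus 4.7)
The plan is to verify the Meyer--Zheng tightness criterion recalled in Appendix \ref{app:Appendix}, which on $\prob^N$ requires, for each coordinate, a uniform-in-$n$ bound on $\sup_t \ee{\abs{X^{(n)}_t}}$ together with a uniform bound on the conditional variation. I would treat the three blocks $L$, $\Awn$ and $\Mwn$ one at a time, and prove the $L^p$-bounds in part (1) as part of the process, since controlling the $\Mwn$-marginal will force moment bounds on $\Awn_T$.

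For the $L^1$ bound on $\Awn_T$, the trivial competitor $A\equiv 0$ yields, by Assumption~3(c),
\[
 \Vwn = J(\un) \leq J(0) \leq \ee{\izT \Phi_h(L_t)\,dt+\Phi_g(L_T)} =: K_0 <\infty.
\]
Combined with $h,g\geq 0$ and $f\geq \eta>0$ componentwise (Assumption~3(a)) this gives $\eta\,\ee{\Awn_T}\leq \ee{\izT f\,d\Awn}\leq K_0$, hence $\sup_n \ee{\Awn_T}<\infty$. Upgrading from $L^1$ to $L^p$ for every $p\geq 1$ is the main technical step. The plan is to feed this first-moment bound back into the variational inequality \eqref{equ:var} tested against $n$-Lipschitz perturbations obtained by truncating $\Awn$ by random caps that are $\bFl_+$-measurable, and combine with the polynomial growth $\abs{\nabla \varphi(l,a)}\leq \Phi_\varphi(l)+\abs{a}^q$ to bootstrap from $L^1$ into $L^p$ through a Gronwall-type iteration. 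An equivalent route is to combine the near-nonnegativity of $\Ywn$ coming from \eqref{equ:21DB} with the terminal identity $\Ywn_T=f(T)+\nabla g (L_T,\Awn_T)$ to extract tail controls on $\Awn_T$ sharp enough to yield all moments.

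Granted the moment bounds, the Meyer--Zheng verification is routine coordinate-wise. For $L$, its law is fixed as $\PP_0$ and $L$ is a quasimartingale under $\PP_0$ (Assumption~2), so $\mathrm{CV}(L)<\infty$ and $\sup_t \ee{\abs{L_t}}<\infty$. For $\Awn$, every component is nondecreasing with $\Awn_0=0$, so $\mathrm{CV}(\Awn)=\ee{\Awn_T}$ and $\sup_t \ee{\abs{\Awn_t}}=\ee{\Awn_T}$, both bounded uniformly by $K_0/\eta$. For $\Mwn$, as an $\bFl_+$-martingale (hence a martingale in its own natural filtration by the tower property) we have $\mathrm{CV}(\Mwn)=0$, and from $\Mwn_T=\nabla g (L_T,\Awn_T)+\Nwn_T$ together with polynomial growth,
\[
 \sup_t \ee{\abs{\Mwn_t}} \leq \ee{\abs{\Mwn_T}} \leq \ee{\Phi_g(L_T)}+\ee{\izT \Phi_h(L_s)\,ds}+(1+T)\,\ee{(\Awn_T)^q},
\]
which is uniformly bounded thanks to $\Phi_g,\Phi_h\in\lpee$ and the $L^q$-bound from (1). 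Coordinate-wise MZ-tightness then gives joint MZ-tightness of $\sq{\tPn}$. The main obstacle is precisely the bootstrap to $L^p$ in part (1) for $p$ beyond $q$: naive constant-level truncation only re-derives the $L^1$ estimate, and a delicate exploitation of the near-Pontryagin relations \eqref{equ:21DA}--\eqref{equ:21DB} (or equivalently the terminal equation for $\Ywn_T$) is what propagates the bound to all orders.
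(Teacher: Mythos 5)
Your overall architecture matches the paper exactly: verify Meyer--Zheng tightness coordinate-by-coordinate, noting that for the nondecreasing $\Awn$ and the martingale $\Mwn$ the conditional variation reduces to $\EE[\Awn_T]$ and $\EE[|\Mwn_T|]$, both of which are controlled (via the polynomial growth hypothesis) once \eqref{equ:lpbound} holds. The genuine gap is in the $L^p$ bootstrap, and it is precisely where you say ``naive constant-level truncation only re-derives the $L^1$ estimate'' that you go wrong. In fact constant-level truncation is exactly what the paper uses, and it gives all moments, because it is not performed at a single level: one truncates at level $r$ for \emph{every} $r\geq 0$, obtaining the entire one-parameter family of tail inequalities
\begin{align*}
  c\,\EE\big[(\Awn_T-r)^+\big] \;\leq\; \EE\Big[\Big(\izT h(L_t,0)\,dt + g(L_T,0)\Big)\ind{\{\Awn_T>r\}}\Big],\qquad r\geq 0,
\end{align*}
which is produced by testing sub-optimality of the stopped control $u^{[n]}\ind{\{\Awn<r\}}$ (using $f\geq c>0$ and convexity/nonnegativity of $h,g$). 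This family of tail estimates, not the Pontryagin relations \eqref{equ:21DA}--\eqref{equ:21DB} nor any Gronwall-type iteration, is the input to Lemma \ref{lem:XY}, which then yields $\|\Awn_T\|_p\leq \tfrac{p}{c}\,\|\!\izT h(L_t,0)\,dt+g(L_T,0)\|_p$ for every $p\geq 1$ uniformly in $n$; the right-hand side is finite by the $\Phi_h,\Phi_g\in\lpee$ hypothesis. Your two suggested alternatives (random $\bFl_+$-measurable caps with a Gronwall iteration; or the near-nonnegativity of $\Ywn$ plus its terminal identity) are not what the paper does, are considerably vaguer, and in particular it is not clear the Gronwall iteration closes: the growth of $\nabla\varphi$ is $O(|a|^q)$ with $q>1$, so a naive feedback of an $L^1$ bound through the first-order condition does not obviously improve the exponent. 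The clean device you are missing is the tail-inequality/Doob-type Lemma \ref{lem:XY}, which converts the constant-level stopping argument into uniform $L^p$ bounds in one stroke.
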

\begin{proof} Since the distribution of first component $L$ does not depend on $n$, by Theorem \ref{thm:MZ} , it will be enough to establish that
  \begin{align*}
    \sup_{n\in\N} \Var^{\PP_n}[A]<\infty \eand \sup_{n\in\N} \Var^{\PP_n}[M]<\infty,
  \end{align*}
  where $\Var^{\PP_n}$ denotes the
  conditional variation (in the quasimartingale sense, as
  defined in \eqref{equ:qvar}, below).
  Moreover, given that all $\Awn$ are nondecreasing, and all $\Mwn$ are
  martingales, relative compactness will follow once we show that
  \begin{align*}
    \sup_n \EE[ \Awn_T]<\infty \eand
    \sup_n \EE[ \sabs{\Mwn_T}] < \infty,
  \end{align*}
  for which  - thanks to our polynomial-growth assumption - it will
  suffice to establish \eqref {equ:lpbound}.
  In order to do that, for $n\in\mathbb N$ and $r\geq 0$  define
  $u^{[n];r}_t=\un_t1_{\{\Awn_t<r\}}$, so that
  \begin{align*}
    A^{[n];r}_t=\izt u^{[n];r}_sds=\Awn_{t\wedge T^{[n]}(r)},
  \end{align*}
  where $\Twn(r)=\inf\sets{t\in[0,T]}{\Awn_t\geq
  r}\in[0,T]\cup\{\infty\}$.
  By the sub-optimality of $u^{[n]; r}$ we have
  \begin{multline*}
    \ee{\izT f(t) u^{[n]; r}_t\,dt+\izT h(L_t,A^{[n]; r}_t)\,dt + g(L_t,A^{[n]; r}_T)}  \\ \geq\ee{\izT f(t) \un_t\,dt+\izT h(L_t,\Awn_t)\,dt +g(L_t,\Awn_T)},
  \end{multline*}
  so that
  \begin{multline*}
    \ee{\int_{T\wedge\Twn( r)}^T f(t)\un_t\,dt} \\ \leq\ee{\int_{T\wedge\Twn( r)}^T h( L_t,  r)-h(L_t, \Awn_t)\,dt + \left(g(L_t, r)-g(L_t,\Awn_T)\right)1_{\{\Awn_T> r\}}}.
  \end{multline*}
  Since $f$ is positive and componentwise bounded away from zero (say, by
  $c>0$), and $h$, $g$ are nonnegative and convex in their second argument, we have
  \begin{align*}
    \ee{\int_{T\wedge\Twn( r)}^T f(t)\un_tdt} \geq c\ee{(\Awn_T-
    r)1_{\{\Awn_T> r\}}},
  \end{align*}
  as well as, on $\set{\Awn_T> r}$,
  \begin{equation*}
    \label{equ:4C97}
    \begin{split}
      \int_{T\wedge\Twn( r)}^T h( L_t ,  r)
      &\leq \int_{T\wedge\Twn( r)}^T h(
      L_t,\Awn_t)\,dt+\izT h( L_t,0)\,dt\ \text{ and } \\
      g(L_t, r) &\leq g(L_t,\Awn_T)+g(L_t,0)
    \end{split}
  \end{equation*}
  It remains to apply Lemma \ref{lem:XY} with $X=\babs{\Awn_T}$ and
  $Y=\izT h(L_t,0)\,dt + g(L_t,0)$, to conclude that $\sq{\Awn_T}$ is
  bounded in $\lpee$, for each $p\geq 0$.
\end{proof}

\subsubsection{The Meyer-Zheng limit and its first properties}
Having established the relative compactness of the sequence $\sq{\tPn}$, we
select one of its limit points $\tPP^*$. By passing to a subsequence, if
necessary, we may assume that $\tPn\to\tPP^*$ in the Meyer-Zheng topology.

\begin{proposition}\label{pro:wadm}
  $\tPP^*_{(L,A)}$ is (weakly) admissible.
\end{proposition}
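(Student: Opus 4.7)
The plan is to verify the two clauses of Definition \ref{def:adm-contr} for $\tPP^* := \tPP^*_{(L,A)}$: that $\tPP^*_L = \PP_0$ and that, under $\tPP^*$, the $\sigma$-algebras $\sF^A_t$ and $\sF^L_T$ are conditionally independent given $\sF^L_{t+}$ for each $t \in [0,T)$. The principal tools are Meyer-Zheng convergence $\tPn \to \tPP^*$, which supplies convergence of finite-dimensional distributions along a cocountable set $D \subseteq [0,T]$ (Appendix \ref{app:Appendix}), together with the Feller hypothesis on $\PP_0$.

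The marginal condition is immediate: each $\tPn$ has $L$-marginal equal to $\PP_0$, so f.d.d.-convergence along $D$ forces $\tPP^*_L$ and $\PP_0$ to agree on a $\pi$-system generating the Borel $\sigma$-algebra of $\sD^d$ (values at times outside $D$ are recovered via right-continuity of paths), whence $\tPP^*_L = \PP_0$.

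The heart of the argument is the conditional-independence condition. Each $\Awn$ was constructed $\bar{\FFF}^L_+$-adapted, so every $\tPn$ is strongly admissible, i.e. $\sF^A_t \subseteq \sF^L_{t+}$ up to $\tPn$-null sets. Fix $t \in D \cap [0,T)$ and consider test variables of the form $G = \prod_{j=1}^p G_j(L_{u_j})$ with $G_j \in C_c^\infty(\R^d)$, $u_j \in D$, $t < u_1 < \cdots < u_p \leq T$, together with $\varphi$ bounded continuous in $(L_{s_i}, A_{s_i})$ at finitely many $s_i \in D \cap [0,t]$. Strong admissibility collapses $\sF^L_{t+} \vee \sF^A_t$ to $\sF^L_{t+}$ upon conditioning under $\tPn$; the Feller/Markov structure of $\PP_0$, iterated from $u_p$ down to $t$, writes $\EE^{\PP_0}[G \mid \sF^L_{t+}]$ as $\Phi(L_t)$ with $\Phi \in C_b(\R^d)$. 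Hence $\EE^{\tPn}[\varphi G] = \EE^{\tPn}[\varphi \Phi(L_t)]$ for every $n$. Meyer-Zheng convergence, applicable because all evaluations occur at times in $D$ and the integrand is bounded and continuous, transfers this to the limit, and, using $\tPP^*_L = \PP_0$, yields
\[
\EE^{\tPP^*}[\varphi G] = \EE^{\tPP^*}\!\big[\varphi\, \EE^{\tPP^*}[G \mid \sF^L_{t+}]\big].
\]
A functional monotone class argument then extends this identity from such $G$ to all bounded $\sF^L_T$-measurable random variables and from such $\varphi$ to all bounded $\sF^L_{t+} \vee \sF^A_t$-measurable functions (values at times outside $D$ are approximated by right-limits along $D$); for $t \notin D$ one uses $\sF^L_{t+} = \bigcap_{n} \sF^L_{t_n+}$ with $t_n \downarrow t$, $t_n \in D$.

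The main obstacle is the double monotone-class extension combined with the passage to the Meyer-Zheng limit: one must widen $\varphi$ beyond bounded continuous cylinder functions on $D$ without destroying the applicability of Meyer-Zheng convergence, and must iterate the Feller property (stated in Theorem \ref{thm:approx}(1)(b) only at the endpoint $T$) down to intermediate times to justify the $C_b$-regularity of $\Phi$. Both difficulties are resolved by systematically exploiting the cocountability of $D$, the right-continuity of the coordinate paths, and the Markov semigroup structure inherent in the Feller assumption.
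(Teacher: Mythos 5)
Your proposal follows the same skeleton as the paper's proof: invoke strong admissibility of the approximating laws, apply the Feller hypothesis to turn the conditional expectation of the future of $L$ into a bounded continuous function of $L_t$, pass the resulting identity through Meyer--Zheng finite-dimensional convergence along a full-measure set of times, and then extend by a monotone-class/density argument. One place where your write-up is actually more careful than the paper's text: the paper tests only against $G(L_T)$, which directly yields conditional independence of $\sF^A_t$ from $\sigma(L_T)$ given $\sF^L_t$, whereas the definition of admissibility requires independence from the full $\sF^L_T$; your choice of test variables $\prod_j G_j(L_{u_j})$ at several future times $u_j \in D$, together with an iterated application of the semigroup, closes exactly this gap (at the mild cost of reading assumption (1)(b) of Theorem \ref{thm:approx} as a statement about the whole transition semigroup rather than just the map from $t$ to $T$, which any genuine Feller process satisfies). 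Conversely, your treatment of the final $\sF^L_t$-versus-$\sF^L_{t+}$ step is slightly underspecified: the Meyer--Zheng argument only delivers the conditional independence with respect to $\sF^L_t$ for $t \in D$, and the paper explicitly uses the Blumenthal-type hypothesis (1)(a) together with $\tPP^*_L = \PP_0$ to identify $\sF^L_t$ with $\sF^L_{t+}$ up to $\tPP^*$-null sets; your decreasing-intersection device $\sF^L_{t+} = \bigcap_n \sF^L_{t_n+}$ with $t_n \downarrow t$, $t_n \in D$, still presupposes the identity at $\sF^L_{t_n+}$ rather than $\sF^L_{t_n}$, so you should either invoke (1)(a) as the paper does or carry out a backwards-martingale limit over the unaugmented $\sigma$-algebras $\sF^L_{t_n}$ as they shrink to $\sF^L_{t+}$.
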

\begin{proof}
  Since the first components $L$  have the same law under each $\tPn$ (namely $\PP_0$),  it is clear that the same remains true in the limit.
  To establish the requirement (2) of Definition \ref{def:adm-contr}, we pick
  $m\in\N$, two continuous and bounded functions $F: (\R^k)^m \to \R$ and  $H:(\R^d)^m \to \R$, as well as
  a $C^{\infty}_c(\R^d)$-function $G$. Thanks to the admissibility of each
  $\tPn$, for
  each $n\in\N$ and all  $t<s_1<\dots < s_m\leq T$, we have
  \begin{align*}
    \eecq{\tPn}{ F\,  G(L_T)}{\sF^L_{t+}}=
    \eecq{\tPn}{ F       }{\sF^L_{t+}}
    \eecq{\tPn}{ G(L_T) }{ \sF^L_{t+}},
  \end{align*}
  where $F=F(A_{s_1},\dots, A_{s_m})$.
  Since $\tPn_L=\PP_L$ and thanks to first assumption  of Theorem \ref{thm:approx}, for all $n\in\N$ we have
  \begin{align*}
    \eecq{\tPn}{ G(L_T) }{ \sF^L_{t+}} = G^*(L_t),\  \tPn-\as,
  \end{align*}
  for some $G^* \in C_b(\R^k)$. Thus, for $0\leq r_1<\dots<r_m\leq t$, we have
  \begin{align*}
    \eeq{\tPn}{ F\,  G(L_T) H}=
    \eeq{\tPn}{ F\, G^*(L_t) H},\ \Pn-\as,
  \end{align*}
    where $H=H(L_{r_1},\dots, L_{r_m})$. Thanks to 
      Theorem \ref{thm:MZ-fin-dim}, after another passage to a subsequence, 
              there exists a full-measure subset $\sT$ of
                $[0,T]$, which includes $T$, such that
                $\PP^n$-finite-dimensional distributions
                with indices in $\sT$ converge towards the $\PP$-finite-dimensional
                distributions. Hence, 
              if $r_1<\dots<r_m$, $t$ and $s_1<\dots < s_m$ belong to such $\sT$, we
                have
  \begin{align*}
    \eeq{\tPP^*}{ F\,  G(L_T) H}=
    \eeq{\tPP^*}{ F\, G^*(L_t) H}.
  \end{align*}
  It follows that, for $t\in \sT$, we have
  \begin{equation}
    \label{equ:88}
    \eecq{\tPP^*}{ F G(L_T) }{ \sF^L_t} = \eecq{\tPP^*}{F}{\sF^L_t} \eecq{\tPP^*}{G(L_T)}{\sF^L_{t}},
    \ \tPP^*-\as,
  \end{equation}
  for all $F,G$. It is a part of our assumptions that a version of the
    Blumenthal's $0-1$-law holds. By Proposition \ref{pro:wadm},
      $\tPP^*_L= \PP^0_L$; it  
        follows that $\sigma$-algebras $\sF^L_{t}$ and $\sF^L_{t+}$ coincide
  $\tPP^*$-a.s., for each $t$.
  Moreover,
  both sides of the equality in \eqref{equ:88} above admit right-continuous versions, so
  it remains to use the density of $\sT$ in $[0,T]$ to conclude that $\tPP^*$ is also weakly admissible.
\end{proof}

Next, we couple the probability measures $\sq{\tPn}$ and $\tPP^*$ on the same probability space.
\begin{lemma}
  \label{lem:limk}
  There exists a probability space and on it a sequence $\sq{(\An,\Ln,\Mn)}$ of
  $\sD^{d+2k}$-valued random elements, as well as an $\sD^{d+2k}$-valued random element $(A,L,M)$ such that
  \begin{enumerate}
    \item the law of $(\Ln,\An,\Mn)$ is $\tPn$, and the law of $(L,A,M)$ is $\tPP^*$, and
    \item For almost all $\omega\in\Omega$, we have
      \begin{align*}
        (\Ln_T(\omega),\An_T(\omega),\Mn_T(\omega)) \to
        (L_T(\omega),A_T(\omega), M_T(\omega))
      \end{align*}
      as well as
      \begin{align*}
        (\Ln_t(\omega),\An_t(\omega),\Mn_t(\omega)) \to
        (L_t(\omega),A_t(\omega), M_t(\omega))
      \end{align*}
      in (Lebesgue) measure in $t$.
  \end{enumerate}
\end{lemma}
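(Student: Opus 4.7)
My plan is to obtain the coupling as a double application of the Skorokhod representation theorem, first in the Meyer-Zheng topology to secure the in-measure convergence, then in an enriched topology to add almost-sure convergence at $T$. For the first step, since the Meyer-Zheng topology on $\sD^{d+2k}$ arises from the embedding of \cd{} paths into the Polish space of pseudo-paths, the classical Skorokhod representation theorem applies to the convergence $\tPn \to \tPP^*$ and produces a common probability space carrying random elements with the prescribed laws on which pseudo-paths convergence holds almost surely. By the defining property of the pseudo-paths topology, this is exactly convergence in Lebesgue measure on $[0,T]$ almost surely, which is the second half of claim (2).

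For the second step, I would enrich the topology by also tracking the terminal values. Consider the augmented laws $\hat\tPn$ on $\sD^{d+2k}\times\R^{d+2k}$ defined by pushing $\tPn$ forward under $\omega \mapsto (\omega, \omega(T))$, with the product space equipped with the Meyer-Zheng topology on the first factor and the Euclidean topology on the second. By the $\lpee$-bound of Proposition \ref{pro:rel-com}, the $\R^{d+2k}$-marginals of $\sq{\hat\tPn}$ are tight; combined with the Meyer-Zheng tightness of the $\sD^{d+2k}$-marginals, this gives tightness of $\sq{\hat\tPn}$ on the Polish product space, so that, after extraction of a subsequence, $\hat\tPn$ converges weakly to some $\hat\tPP$ whose $\sD^{d+2k}$-marginal is $\tPP^*$.

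The main obstacle will be the identification of $\hat\tPP$ as the ``diagonal'' coupling, namely as the law of $(X, X_T)$ under $\tPP^*$, since the evaluation $\omega \mapsto \omega(T)$ is not Meyer-Zheng continuous. To handle this, I would test against product functions $\phi(\omega)\psi(y)$ with $\phi$ continuously depending on only finitely many coordinates $\omega(s_1),\dots,\omega(s_m)$ with $s_i\in\sT$ and $\psi\in C_b(\R^{d+2k})$. For such test functions, $\int \phi\otimes\psi\, d\hat\tPn = \EE^{\tPn}[\phi(X)\psi(X_T)]$ converges to $\EE^{\tPP^*}[\phi(X)\psi(X_T)]$ by Theorem \ref{thm:MZ-fin-dim}, since $s_1,\dots,s_m,T$ all lie in $\sT$. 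A density argument approximating Meyer-Zheng continuous bounded functions by such finite-coordinate functionals (e.g., by Stone--Weierstrass applied to the algebra generated by Riemann-type sums over partitions in $\sT$) will extend this to general product test functions and pin $\hat\tPP$ down as the diagonal coupling. A final application of the Skorokhod representation theorem on the Polish product space then delivers the common probability space of the lemma, with the first factor supplying the Lebesgue-measure convergence in $t$ and the second factor supplying the almost-sure convergence at $T$.
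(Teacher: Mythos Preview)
Your two-step plan is workable, but it is considerably more elaborate than what the paper actually needs, and the reason is that you are using the ``plain'' pseudopath topology (pushforward of Lebesgue measure only) rather than the paper's reinforced version. In subsection~\ref{sse:pseudopath} the pseudopath of $x\in\sD^N$ is defined as the pushforward of $\Leb+\delta_{\{T\}}$, not just $\Leb$; consequently, convergence in the paper's pseudopath topology is, by Dellacherie's lemma, exactly convergence in the measure $\Leb+\delta_{\{T\}}$ on $[0,T]$, which already packages \emph{both} convergence in Lebesgue measure in $t$ \emph{and} convergence at the terminal time $T$. With this in hand, the paper's proof is a one-liner: apply Dudley's extension of the Skorokhod representation theorem (valid for separable metric spaces, since $(\sD^{d+2k},\tau_{pp})$ is metrizable but not Polish) to the convergence $\tPn\tomz\tPP^*$, and read off both conclusions of part~(2) from the characterization of $\tau_{pp}$-convergence.

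Your route --- first getting Lebesgue-measure convergence, then separately enriching by the terminal value and re-running Skorokhod on a product space --- would ultimately succeed, but introduces avoidable work and a couple of soft spots. First, the tightness of the $\R^{d+2k}$-marginals does not follow \emph{directly} from Proposition~\ref{pro:rel-com}, which only bounds $\Awn_T$ in $\lpee$; you would still need to argue tightness of $\Mwn_T$ from the polynomial-growth assumptions and the formula $\Mwn_T=\nabla g(L_T,\Awn_T)+\Nwn_T$. Second, the identification of the limit $\hat\tPP$ as the diagonal coupling via Stone--Weierstrass is plausible but delicate: you need the algebra generated by finite-coordinate functionals with indices in $\sT$ to be dense among $\tau_{pp}$-continuous bounded functions, which requires some care since distinct \cd{} paths agreeing on a dense set can have the same pseudopath. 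None of this is needed once you notice that the terminal Dirac mass is already built into $\tau_{pp}$.
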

\begin{proof}
  The first step is use Dudley's extension (see \cite{Dud68}, Theorem 3., p.~1569) of the Skorokhod's representation theorem to transform the Meyer-Zheng convergence to an
  almost-sure convergence in the pseudopath topology. Indeed, the original
  theorem of Skorokhod cannot be applied directly since the canonical space
  $\sD^{d+2k}$, together with the pseudopath topology is not Polish.  Next, a
  minimal adjustment of a result of Dellacherie (see Lemma 1., p.~356 in
  \cite{MeyZhe84}) states that the pseudopath topology and the topology of the
  convergence in the sum $\ld+\delta_{T}$ of the Lebesgue measure $\ld$ on
  $[0,T]$ and the Dirac mass $\delta_T$ on $\set{T}$ coincide.
\end{proof}

On the probability space of Lemma \ref{lem:limk}, we define the  sequences
\begin{align*}
  \Nn &= \int_0^{\cdot} \nabla h (\Ln_t,\An_t)\, dt, & \Fn_{\cdot} &= \int_0^{\cdot} f(t)\, d\An_t, \\
  \intertext{as well as}
  N &= \int_0^{\cdot } \nabla h (L_t,A_t)\, dt,  & F &= \int_0^{\cdot} f(t)\, dA_t,  &
\end{align*}
Using the polynomial-growth assumptions and the $\lpee$-boundedness of $\sq{\An}$
we see immediately that
\begin{align*}
  \Nn\to N \text{ in } \lone(\ld\otimes \PP),  \eand \Mn_T\tolone
  M_T.
\end{align*}
To deal with $\sq{\Fn}$, we can use an argument completely analogous to that
in the last part of the proof of Theorem \ref{thm:exist} (with $K$ replaced by
$[0,T]\setminus \sT$). Indeed, together with the $\lpee$-boundedness of
$\sq{\An_T}$, for all $p\geq 1$, it yields that
\begin{equation}
  \label{equ:FntoF}
  \Fn_T \tolone F_T.
\end{equation}
\subsubsection{A passage to a limit in the Pontryagin FBSDE}
We define $\Yn = f + \Mn - \Nn $ so that
\begin{align*}
  \Yn \to Y=f+ M - N \text{ in } \lone(\ld\otimes \PP).
\end{align*}
Thus,
\begin{equation*}
  \label{equ:352C}
  \begin{split}
    \EE[ \izT Y_t^-\, dt] & = \lim_n \EE[ \izT (\Yn_t)^-\,
    dt] = \lim_n \EE[ \izT (\Ywn_t)^-\, dt] = 0,
  \end{split}
\end{equation*}
where the last equality follow directly from equation \eqref{equ:21DB} of
Proposition \ref{pro:206B}. Consequently, by right continuity,
\begin{align}\label{equ:Ypos}
Y_t\geq 0\eforall t\in [0,T].
\end{align}
Next, we observe that, by Lemma \ref{lem:limk} and equation \eqref{equ:FntoF}, we have
\begin{align*}
  \EE[ C(L,A)] =  \lim_n \EE[ C(\Ln,\An)]=\inf_n \EE[ C(\Ln, \An)].
\end{align*}
Therefore, for each $n\in\N$, we have
\begin{align*}
  0 \leq \EE[ C(\Ln, \An)] - \EE[ C(L,A)]  =: K_n + I_n,
\end{align*}
where
\begin{align*}
K_n= \EE[ C(\Ln,\An)] - \EE[ C(\Ln,A)] \eand
I_n=\EE[ C(\Ln,A) - C(L, A)].
\end{align*}
By convexity of $h$ and $g$ and integration by parts  we have
\begin{align*}
  K_n &\leq \EE[ \scl{ \da C (\Ln,\An)}{ \An-A}]
  = \ee{\Fn_T-F_T}+\\ &\qquad+\ee{\izT \nabla h (\Ln_t, \An_t) (
  \An_t-A_t)\,dt+\nabla
  g (\Ln_T,\An_T) (\An_T - A_T)}\\
  & =
  \EE[\int_0^T \Yn_t\, d\An_t] - R_n
\end{align*}
where
\begin{align*}
  R_n=\EE[ F_T + \int_0^T \nabla h (\Ln_t, \An_t) A_t\, dt + \nabla g (\Ln_T,\An_T) A_T].
\end{align*}
By equation \eqref{equ:21DA} of Proposition \ref{pro:206B}, we then have
\begin{align*}
  K_n \leq -n \EE[ \int_0^T (\Yn_t)^-\, dt] - R_n \leq -R_n.
\end{align*}
On the other hand, thanks to the growth assumptions,
the family $\sq{ C(\Ln,A) - C(L,A)}$ is
uniformly integrable. By the continuity of $g$ and $h$ in the $l$-argument, we
have $C (\Ln,A)\to C (L,A)$ a.s., so $I_n\to 0$, as $n\to\infty$.
It follows that $\liminf R_n \leq  0$, and, therefore,
\begin{equation}
  \label{equ:flez}
  \EE[ F_T + \int_0^T \nabla h (L_t, A_t) A_t\, dt + \nabla g (L_A,A_T) A_T] \leq 0.
\end{equation}
Next we investigate the martingale properties of the third component process
$M$, in the spirit of the martingale-preservation
property of the Meyer-Zheng convergence (see Theorem 11., p.~368 in \cite
{MeyZhe84} ).
On the filtered probability space of the capped problem (i.e.,
of subsection \ref{sse:proof-approx}), the process
$\Mwn$ is a martingale, and $\Awn$ is adapted with respect to the
augmented filtration generated by $L$. Thus, we have
\begin{align*}
  \ee{ \Mn_t \vp\Big((\Ln_{t_i},\An_{t_i}, \Mn_{t_i})_{1\leq i \leq k}\Big)}=
  \ee{\Mn_T \vp\Big((\Ln_{t_i},\An_{t_i}, \Mn_{t_i})_{1\leq i \leq k}\Big)}
\end{align*}
for each $k\in\N$, a continuous bounded function $\vp:\R^{d+2k}\to\R$
and any choice of $0\leq t_1<t_2<\dots < t_k\leq t$.  It follows that, with
$\sT$ as in Theorem \ref{thm:MZ-fin-dim}, that
\begin{align}\label{equ:MMart}
  \EE[ M_T|\sF^{L,A,M}_t] = M_t,\text{ a.s., for $t\in \sT$,}
\end{align}
and, then, by the right-continuity of the paths of $M$,
that $M$ is an $\FFF^{(L,A,M)}$-
martingale. The inequality \eqref{equ:flez} implies that after another
round of integration by parts - we have
\begin{align} \label{equ:YdA}
  \int_0^T Y_t\, dA_t \leq 0,\text{ a.s.}
\end{align}
It remains to aggregate the above results to conclude that the (law)
of the triplet $(L,A,Y)$ is a weak solution of the Pontryagin FBSDE
(Definition \ref{def:FBSDE}). Part (1) is exactly the content of Proposition
\ref{pro:wadm}, while part (2) follows from \eqref{equ:Ypos} and \eqref
{equ:YdA}.
Finally
(3) is simply a restatement of the martingale property of the process $M$,
established after \eqref{equ:MMart} above.
Theorem \ref{thm:max-equiv}, part (2) now allows us to conclude that the law of
the pair
$(L,A)$ is a solution to the monotone follower problem.

\appendix
\section{Auxiliary results}
In this appendix we gather several results that are used in the body
of the paper. They either admit hard-to-locate standard proofs, or are minimal extensions of the known results; we state them here, and supply proofs, for completeness sake.
\label{app:Appendix}
\subsection{Coupling of weakly admissible controls}
\label{sse:coupling}
We start simple coupling lemma based on a standard use of regular
conditional probabilities. It is used in proofs of Theorem
\ref{thm:exist} and Theorem \ref{thm:approx} above.
\begin{lemma}[Coupling]
  \label{lem:coupling}
  For $d,k,l\in\N$, let  $\PP\in\prob^{d+k}(L,Q)$ and
  $\PP'\in\prob^{d+l}(L',R')$
  be such that $\PP_{L} = \PP'_{L'}$. Then,  there exists
  a probability measure
  $\bar{\PP}\in\prob^{d+k+l}(\bar{L},\bar{Q},\bar{R})$,
  denoted by $\PP\ofl\PP'$ such that
  \begin{enumerate}
    \item $\bar{\PP}_{\bar{L},\bar{Q}} = \PP_{L,Q}$,
    \item $\bar{\PP}_{\bar{L},\bar{R}} = \PP'_{L',R'}$, and
    \item $\bar{Q}$ and $\bar{R}$ are
      $\bar{\PP}$-conditionally independent, given $\bar{L}$.
  \end{enumerate}
\end{lemma}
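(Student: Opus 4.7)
The plan is to construct $\bar{\PP}$ by pasting $\PP$ and $\PP'$ along their common $L$-marginal using regular conditional probabilities, and then to verify the three properties by a routine Fubini-kernel computation.

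First I would recall that, although the Skorokhod space carries the Borel $\sigma$-algebra generated by coordinate maps in our setup, this is the same $\sigma$-algebra that arises from the Skorokhod topology (as noted in the paper), so $\sD^d$, $\sD^k$ and $\sD^l$ are all standard Borel spaces. In particular, regular conditional probabilities exist. Writing $\mu := \PP_L = \PP'_{L'}$ for the common marginal on $\sD^d$, I would invoke disintegration to obtain Markov kernels
\begin{align*}
K : \sD^d \times \sB(\sD^k) \to [0,1], \qquad K' : \sD^d \times \sB(\sD^l) \to [0,1],
\end{align*}
such that for every bounded measurable $\varphi$ on $\sD^{d+k}$ and $\psi$ on $\sD^{d+l}$,
\begin{align*}
\EE^{\PP}[\varphi(L,Q)] = \int_{\sD^d} \int_{\sD^k} \varphi(\ell, q)\, K(\ell, dq)\, \mu(d\ell), \\
\EE^{\PP'}[\psi(L',R')] = \int_{\sD^d} \int_{\sD^l} \psi(\ell, r)\, K'(\ell, dr)\, \mu(d\ell).
\end{align*}

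Next I would define $\bar{\PP}$ on $\sD^{d+k+l}$ (with coordinates $(\bar L,\bar Q,\bar R)$) by the product-kernel formula
\begin{align*}
\bar{\PP}(E) = \int_{\sD^d} \int_{\sD^k} \int_{\sD^l} \ind{E}(\ell,q,r)\, K'(\ell, dr)\, K(\ell, dq)\, \mu(d\ell).
\end{align*}
Measurability of the inner integrals in $\ell$ follows from the Markov-kernel property of $K,K'$, so this is a well-defined probability measure.

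Finally I would verify (1)--(3) directly. For (1), applying $\bar{\PP}$ to a cylinder event depending only on $(\bar L,\bar Q)$ kills the $K'$-integral (since $K'(\ell,\cdot)$ is a probability) and reproduces the disintegration of $\PP$; (2) is symmetric. For (3), one checks that for bounded measurable $F$ on $\sD^{d+k}$ and $G$ on $\sD^{d+l}$,
\begin{align*}
\EE^{\bar{\PP}}[F(\bar L,\bar Q)G(\bar L,\bar R)] = \int \!\!\Big(\! \int F(\ell,q) K(\ell,dq)\!\Big)\!\Big(\! \int G(\ell,r) K'(\ell,dr)\!\Big)\mu(d\ell),
\end{align*}
which, together with (1) and (2), yields $\EE^{\bar{\PP}}[F\,G \mid \bar L] = \EE^{\bar{\PP}}[F\mid \bar L]\,\EE^{\bar{\PP}}[G\mid \bar L]$, i.e., the desired conditional independence. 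There is no substantive obstacle here: the only thing to be careful about is the standard-Borel property needed to invoke disintegration, which is immediate from the paper's conventions.
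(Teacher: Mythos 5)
Your proposal follows the same route as the paper: disintegrate $\PP$ and $\PP'$ into regular conditional probabilities over the common marginal $\mu=\PP_L=\PP'_{L'}$ (justified by the standard Borel structure of the Skorokhod space), form the product kernel, and integrate it against $\mu$ to obtain $\bar\PP$. The paper leaves the verification of (1)--(3) to the reader, while you spell out the Fubini-kernel computation, but this is the same argument.
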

\begin{proof}
  The space $\sD^d(L,Q)$ is a Borel space, so there exists a regular conditional
  distribution (r.c.d.)
  \begin{align*}
    \mu: \sD^{d}(L,Q)\times \sB(\sD^k) \to [0,1],\ \mu(x,B)
    = \PP[ Q\in B| L=x],
  \end{align*}
  for $Q$, given $L$ under $\PP$.
  Similarly, let
  $\mu': \sD^d(L',R') \times \sB(\sD^l)\to [0,1]$ denote the $\PP'$-r.c.d.~of
  $R$ given $L'$ and
  let $\rho$ denote the the product kernel $\rho:\sD^d \times
  \sB(\sD^{k+l}) \to [0,1]$, given by
  \begin{align*}
    \rho(x, B) = (\mu(x,\cdot)\otimes \nu(x,\cdot))(B),\efor x\in\sD^d \eand
    B\in\sB(\sD^{k+l}).
  \end{align*}
  We define $\bar{\PP}$ as
  the (Ionescu-Tulcea-type) product $\PP_L \otimes \rho$ of the measure $\PP_L$
  and the kernel $\rho$, i.e., the probability measure given by
  \begin{align*}
    \bar{\PP}[C] = \int_{x\in \sD^d} \int_{(q,r)\in \sD^{k+l}}
    \ind{C}(x,q,r)\, \rho(x,dq,dr)\, \PP_L(dx),
  \end{align*}
  for $C\in\sB(\sD^{d+k+l})$.
  The reader will readily check that so defined, $\bar{\PP}=\PP\ofl\PP'$ satisfies
  all three conditions in the statement.
\end{proof}
An immediate application of Lemma \ref{lem:coupling} is the
following
\begin{lemma}
  \label{lem:prob-space}
  Let $\seq{\PP}$ be a sequence in $\sA$.  Then, there exists
  a probability space and, on it,
  \cd{} processes $\prf{L_t}$, $\prf{\An_t}$, $n\in\N$,
  such that the joint law of $(L,\An)$ is $\PP_n$, for each
  $n\in\N$,
  and $\sq{\An}$ are independent, conditionally on $L$.
\end{lemma}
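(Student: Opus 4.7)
The plan is to iterate Lemma \ref{lem:coupling} to a countable family by combining it with a standard Ionescu--Tulcea (product-kernel) construction. The crucial observation is that, since $\PP_n\in\sA$ for every $n$, part (1) of Definition \ref{def:adm-contr} forces the $L$-marginal of every $\PP_n$ to coincide with $\PP_0$; this shared marginal is exactly what makes the simultaneous coupling possible.

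First, for each $n\in\N$, I would use the fact that $\sD^d$ is a Borel space to extract a regular conditional distribution $\mu_n:\sD^d\times \sB(\sD^k)\to[0,1]$ of the $A$-component under $\PP_n$, given $L$, exactly as in the proof of Lemma \ref{lem:coupling}. Next, for each fixed $x\in\sD^d$, I would form the countable product measure $\bigotimes_{n\in\N}\mu_n(x,\cdot)$ on the Polish space $\prod_{n\in\N}\sD^k$ (well-defined since each factor is a probability measure) and verify that the resulting map
\[ \rho:\sD^d\times \sB\Big(\textstyle\prod_{n\in\N}\sD^k\Big)\to [0,1] \]
is a probability kernel. Defining $\bar{\PP}=\PP_0\otimes \rho$ on the product space $\sD^d\times \prod_{n\in\N}\sD^k$ then realizes the coordinate process $L$ together with the required sequence $\sq{\An}$.

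By construction, conditional on $L$, the process $\An$ has distribution $\mu_n(L,\cdot)$; together with $\bar{\PP}_L=\PP_0=(\PP_n)_L$ this gives $\bar{\PP}_{L,\An}=\PP_n$ by the uniqueness clause of the disintegration theorem. Conditional independence of $\sq{\An}$ given $L$ is immediate from the product structure of the kernel $\rho$.

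The only mildly technical point is the $x$-measurability of $\rho(x,B)$ for general Borel $B$ in the countable product. Since finite-dimensional cylinder-set values of $\rho(x,\cdot)$ are measurable in $x$ (being finite products of the measurable kernels $\mu_n(x,\cdot)$), the standard monotone-class argument extends this measurability to the full product Borel $\sigma$-algebra. Everything else is a direct bookkeeping extension of Lemma \ref{lem:coupling}, with no substantial new obstacle.
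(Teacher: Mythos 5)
Your proposal is correct and follows essentially the same route as the paper: both exploit the shared $L$-marginal $\PP_0$ and a disintegration into kernels $\mu_n(L,\cdot)$, the only cosmetic difference being that the paper phrases the construction as repeatedly invoking Lemma~\ref{lem:coupling} to build consistent finite-dimensional distributions and then cites Kolmogorov's extension theorem, whereas you assemble the countable product kernel $\rho(x,\cdot)=\bigotimes_n\mu_n(x,\cdot)$ directly and set $\bar{\PP}=\PP_0\otimes\rho$. Your version is slightly more explicit (in particular you correctly flag and resolve the measurability of $x\mapsto\rho(x,B)$ via cylinders and a monotone-class argument), but it is the same Ionescu--Tulcea/Kolmogorov-style construction.
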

\begin{proof}
  We can think of the required sequence $L,A^{(1)}, A^{(2)} ,\dots$ as a
  stochastic process with values in $\sD^k$ (and $\sD^d$ for its first
  component). Using the information on the joint distributions and the
  requirement of conditional independence from the statement, we can apply
  Lemma \ref{lem:coupling} repeatedly to construct its (consistent) family of
  finite-dimensional distributions. The target spaces $\sD^d$ and $\sD^k$ are
  Polish, so the sought-for probability space $(\Omega,\FF,\PP)$
  can now be constructed by using Kolmogorov's extension theorem.
\end{proof}
\subsection{An $\lpee$ estimate}
\begin{lemma}\label{lem:XY}
  Given $p\geq1$, suppose that  $X\in\lone_+$ and $Y\in\lpee_+$ satisfy
  \begin{align}\label{XY}
    \EE[(X-r)^+]\leq\EE[Y1_{\{X>r \}}], \quad for\ all\ r \geq0.
  \end{align}
  Then, $X\in\lpee$ and $\lnorm{X}_p\leq p\lnorm{Y}_p$.
\end{lemma}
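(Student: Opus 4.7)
\textbf{Proof plan for Lemma \ref{lem:XY}.}

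The plan is to transform the tail-integral hypothesis into the ``layer-cake'' inequality $\EE[X^p]\leq p\,\EE[Y X^{p-1}]$ and then finish by H\"older. First I will observe that by Fubini
\[
\EE[(X-r)^+] = \int_r^\infty \PP[X>s]\,ds,
\]
so the hypothesis \eqref{XY} reads $\int_r^\infty \PP[X>s]\,ds \leq \EE[Y\ind{\set{X>r}}]$ for every $r\geq 0$. The case $p=1$ is then immediate by setting $r=0$, giving $\EE[X]\leq\EE[Y]$; so from now on I may assume $p>1$.

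Next I would multiply this inequality by $p(p-1) r^{p-2}$ and integrate over $r\in(0,\infty)$. By Fubini, the left-hand side equals
\[
p(p-1)\int_0^\infty \PP[X>s]\int_0^s r^{p-2}\,dr\,ds
 = p\int_0^\infty s^{p-1}\PP[X>s]\,ds = \EE[X^p],
\]
while the right-hand side equals
\[
p(p-1)\,\EE\!\left[Y\int_0^X r^{p-2}\,dr\right] = p\,\EE[Y X^{p-1}].
\]
Thus
\[
\EE[X^p]\leq p\,\EE[Y X^{p-1}].
\]
Now by H\"older's inequality applied with exponents $p$ and $p/(p-1)$,
\[
\EE[Y X^{p-1}] \leq \norm{Y}_p\,(\EE[X^p])^{(p-1)/p}.
\]
If I already knew $\EE[X^p]<\infty$, division would yield $\norm{X}_p\leq p\norm{Y}_p$.

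The main (mild) obstacle is justifying the division, i.e.~showing $\EE[X^p]<\infty$ a priori. I would handle this by truncation: set $X_n=X\wedge n$. For $r\geq n$ both sides of the candidate inequality vanish, while for $r<n$ one has $\set{X_n>r}=\set{X>r}$ and $(X_n-r)^+\leq (X-r)^+$, so
\[
\EE[(X_n-r)^+]\leq \EE[(X-r)^+] \leq \EE[Y\ind{\set{X>r}}] = \EE[Y\ind{\set{X_n>r}}].
\]
Hence $X_n$ satisfies the same hypothesis, and since $X_n\leq n$ we have $\EE[X_n^p]<\infty$. Running the Fubini-plus-H\"older argument above for $X_n$ gives $\norm{X_n}_p\leq p\norm{Y}_p$, and monotone convergence as $n\to\infty$ yields $\norm{X}_p\leq p\norm{Y}_p$, which in particular forces $X\in\lpee$. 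This completes the plan.
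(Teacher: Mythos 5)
Your proposal is correct and follows essentially the same route as the paper: multiply the hypothesis by a multiple of $r^{p-2}$, integrate in $r$, and close with H\"older. The only difference is cosmetic bookkeeping of the truncation needed to justify the final division — the paper integrates over the finite range $[0,M]$ so that $X\wedge M$ appears automatically, while you truncate $X$ to $X\wedge n$ and verify it inherits the hypothesis; both amount to the same thing and both are fine.
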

\begin{proof}
  The conclusion clearly holds for $p=1$: it suffices to substitute $r=0$
  into \eqref{XY}. For $p>1$, multiplying both sides of \eqref{XY} by
  $(p-1)r^{p-2}$ and integrating in $r$ over $[0,M]$, for $M>0$,  yields
  \begin{align*}
    \EE[Y(X\wedge M)^{p-1}]&=
    \EE\left[Y\int_0^M
    (p-1)r^{p-2}1_{\{X>r\}}dr\right]\\
    &\geq\EE\left[\int_0^M (p-1)r^{p-2}(X-r)^+ dr\right]\geq
    \frac{1}{p}\EE[(X\wedge M)^p].
  \end{align*}
  It remains to apply H\"older's inequality to obtain
    \[
    \frac{1}{p}\EE[(X \wedge M)^p]\leq\EE[Y(X \wedge
    M)^{p-1}]\leq\lnorm{Y}_p\lnorm{X\wedge M}_p^{p-1},
    \]
    which, after dividing both sides by  $\norm{X\wedge M}_{p}^{p-1}$, and letting
    $M\to\infty$, completes the proof. 
\end{proof}
\subsection{The pseudopath topology}
\label{sse:pseudopath}
The topology $\tau_{pp}$ we consider on $\sD^N$ is a following minimal
modification of the pseudopath topology introduced in \cite{MeyZhe84}.

A path $x\in \sD^N$ can be identified with its
\define{pseudopath}, i.e., a finite
measure on the product $[0,T]\times \R^N$, obtained as a push-forward of
the ``reinforced'' Lebesgue measure $\Leb+\delta_{\set{T}}$
on $[0,T]$,  where $\delta_{\set{T}}$ denotes the Dirac mass at $\set{T}$,
via the map
\begin{align*}
  [0,T]\ni t\mapsto (t,x(t)) \in [0,T]\times \R^N.
\end{align*}
With such an identification, the trace of the topology of weak convergence
of measures is induced on $\sD^N$;  we call it the \define{pseudopath
topology} and denote by $\tau_{pp}$. It is shown in \cite[Lemma 1, p.~365]{MeyZhe84} - we modify
this result (and all others) minimally to fit our setting - that the pseudopath topology is
metrizable and that, for a sequence $\seq{x}$
in $\sD$,  we have $x_n \topp x\in\sD$, where $\topp$ denotes the
convergence in the pseudopath topology, if and only if
\begin{equation}\label{equ:ppath-char}
  \begin{split}
    &x_n(T) \to x(T)\eand
    \izT b(s,x_n(s))\, ds \to \izT b(s,x(s))\, ds,
  \end{split}
\end{equation}
for all continuous and bounded functions $b:[0,T]\times \R^N\to \R$. Finally,
we mention a result due to Dellacherie (see \cite{MeyZhe84}, Lemma 1, p.~356)
which simply states that the convergence in the pseudopath topology and the
convergence in the measure $\ld +\delta_{\set{T}}$ coincide.

\subsection{The Meyer-Zheng convergence}
\label{sse:meyer-zheng}
Using the pseudopath topology $\tau_{pp}$ on $\sD^N$, one can define the
\define{Meyer-Zheng topology} on $\prob^N$  as the topology of weak
convergence of probability measures on the topological space
$(\sD^N,\tau_{pp})$.
Like the pseudopath topology $\tau_{pp}$ on $\sD^N$,
the Meyer-Zheng topology on $\prob$ is metrizable, but not necessarily
Polish (see p.~372 in \cite{MeyZhe84}); the convergence in the
Meyer-Zheng topology is denoted by $\tomz$.
As shown in \cite{MeyZhe84},  the Borel $\sigma$-algebra generated by the
pseudopath topology $\tau_{pp}$ coincides with the canonical
$\sigma$-algebra on $\sD^N$,
i.e., the one induced by the coordinate maps or, equivalently,
by the Skorokhod topology. Moreover, the set of all pseudopaths, denoted by $\Psi$, under $\tau_{pp}$ is Polish.

We note the following (minimal extension) of a useful consequence of the Meyer-Zheng convergence
( see \cite{MeyZhe84}, Theorem 5., p.~365):
\begin{theorem}[Meyer and Zheng, 1984]
\label{thm:MZ-fin-dim}
Let $\sq{\PP^n}$ be a sequence
of probability measures on $\sD^N$ such that that $\PP^n\to \PP$ in the Meyer-Zheng sense.
Then there exists a  subset $\sT\subseteq [0,T]$ of full
Lebesgue measure, containing $T$, such that the $\PP^n$-finite-dimensional distributions
with indices in $\sT$ of the coordinate
process converge to the corresponding finite-dimensional distributions under $\PP$, perhaps after a passage to a subsequence.
\end{theorem}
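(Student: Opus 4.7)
The plan is to combine Skorokhod representation on the (Polish) space of pseudopaths with the characterization of pseudopath convergence as convergence in the measure $\ld+\delta_T$, and then extract, via a Fubini-type argument, a single set $\sT$ of time points at which pointwise a.s. convergence holds along a common subsequence.

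First, since Meyer-Zheng convergence is, by definition, weak convergence on the (Polish) space $\Psi$ of pseudopaths, I would apply Dudley's extension of Skorokhod's representation theorem (used already in Lemma \ref{lem:limk}) to obtain a probability space on which $\sD^N$-valued random elements $X^n$, $X$ with respective laws $\PP^n$, $\PP$ satisfy $X^n\topp X$ almost surely. (Direct application of the classical Skorokhod theorem is not possible because $(\sD^N,\tau_{pp})$ is only a Borel subset of the Polish space $\Psi$, not itself Polish.) By the Dellacherie result recalled in subsection \ref{sse:pseudopath}, almost-sure pseudopath convergence is almost-sure convergence in the measure $\ld+\delta_{\set{T}}$, so
\[
X^n_T\to X_T \text{ a.s.,\quad and \quad } \ld\bigl(\{t\in[0,T]: |X^n_t-X_t|>\eps\}\bigr)\to 0 \text{ a.s., for every }\eps>0.
\]

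Next, I would upgrade pathwise convergence in Lebesgue measure to convergence in probability on the product space $\Omega\times[0,T]$. Truncating by $\wedge 1$ and invoking bounded convergence yields
\[
\EE\Bigl[\int_0^T\bigl(|X^n_t-X_t|\wedge 1\bigr)\,dt\Bigr]\to 0,
\]
and Fubini rewrites the left-hand side as an integral on $\Omega\times[0,T]$ with respect to $\PP\otimes\ld$. Hence $X^n-X\to 0$ in $\PP\otimes\ld$-measure, and I may extract a subsequence $n_k$ along which $X^{n_k}_t(\omega)\to X_t(\omega)$ for $\PP\otimes\ld$-a.e.\ $(\omega,t)$. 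A second application of Fubini produces a set $\sT\subseteq[0,T]$ of full Lebesgue measure such that for every $t\in\sT$, $X^{n_k}_t\to X_t$ $\PP$-almost surely. Since $X^n_T\to X_T$ $\PP$-a.s.\ along the original sequence (and hence along the subsequence), I can, and do, adjoin $T$ to $\sT$.

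Finally, for any finite collection $t_1<\dots<t_m$ in $\sT$, the pointwise a.s.\ convergence along $n_k$ at each coordinate gives joint a.s.\ convergence $(X^{n_k}_{t_1},\dots,X^{n_k}_{t_m})\to(X_{t_1},\dots,X_{t_m})$, which implies weak convergence of the corresponding laws; this is precisely convergence of the $\PP^{n_k}$-finite-dimensional distributions with indices in $\sT$ to those of $\PP$. The main obstacle is the non-Polish nature of $(\sD^N,\tau_{pp})$, resolved by Dudley's extension; the secondary subtle point is that a single subsequence must work for \emph{every} $t\in\sT$, which is exactly what the Fubini-based extraction delivers, and is the reason why the statement allows (indeed requires) a passage to a subsequence.
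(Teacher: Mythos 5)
Your proof is correct. Note, however, that the paper supplies no proof of this particular statement at all: it is simply cited from \cite{MeyZhe84} (Theorem 5, p.~365), as a ``minimal extension'' of that result, so there is no ``paper's proof'' to compare against. Your Dudley--Skorokhod-representation followed by the Fubini/subsequence extraction is the standard route (essentially the argument in \cite{MeyZhe84} itself), and it correctly accounts for the paper's extension, namely the inclusion of the endpoint $T$ in $\sT$, by exploiting the $\delta_{\set{T}}$ component of the reference measure $\ld+\delta_{\set{T}}$: since $\delta_{\set{T}}$ assigns mass $1$ to $\set{T}$, convergence in this measure forces $X^n_T\to X_T$ a.s.\ along the whole sequence. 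One cosmetic caveat: after invoking the Skorokhod--Dudley representation you are working under an auxiliary probability on the representation space, which you should not denote by $\PP$ (already the name of the limit law on $\sD^N$); with that symbol disambiguated, the argument is airtight.
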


\subsection{A criterion for compactness}
\label{sse:compactness}
One of the reasons
the Meyer-Zheng topology proved to be quite useful in
probability theory and optimal stochastic control is a simple
characterization of compactness it affords. Unlike the Skorokhod topology,
where compactness needs a stronger form of equicontinuity, the subsets of $\prob^N$ are
Meyer-Zheng-compact as soon as they are suitably bounded.
The following result is a compilation of two statements in
\cite{MeyZhe84}, namely Theorem 4., p.~360, and Theorem 5., p.~365,
minimally adapted to fit our setting.
We remind the reader that an adapted stochastic process $X$, defined on a
filtered measurable space $(\Omega,\sF,\prf{\sF_t})$ is
said to be a \define{quasimartingale} under the probability measure $\PP$
if $X_t\in\lone(\PP)$, for all $t\in [0,T]$ and $\Var^{\PP}[X]<\infty$, where
\begin{align}
  \label{equ:qvar}
  \Var^{\PP}[X]= \sup \sum_{j=1}^m
  \Beeq{\PP}{ \Babs{\eeq{\PP}{
  X_{t_j}-X_{t_{j-1}}\big|\sF_{t_j}}}}
  +\eeq{\PP}{\abs{X_T}},
\end{align}
and the supremum is taken over all  partitions $0=t_0< \dots <
t_m=T$, $m\in\N$,  of $[0,T]$.
\begin{theorem}[Meyer and Zheng, 1984]
\label{thm:MZ}
Let $\seq{\PP}$ be a sequence of probability measures on $\sD^N$ (equipped with the filtration generated by the coordinate maps) with the
property that each coordinate process $\prf{X^i_t}$,
$i=\ft{1}{N}$, is a $\PP_n$-quasimartingale
for each $n\in\N$ and
\begin{align*}
  \sup_{n\in\N} \Var^{\PP_n}[X^i]<\infty,
  \text{ for all } i=\ft{1}{N}.
\end{align*}
Then, there exists a subsequence $\sqk{\PP_{n_k}}$
of $\seq{\PP}$ and $\PP\in\prob$ such that
$\PP_{n_k}\tomz \PP$ in the Meyer-Zheng topology.
\end{theorem}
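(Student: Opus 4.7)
The plan is to prove Meyer-Zheng compactness by tightness on the Polish space $\Psi$ of pseudopaths. Since the pseudopath map $\sD^N \to \Psi$ identifies the Meyer-Zheng topology on $\prob^N$ with weak convergence of push-forward measures on the Polish space $\Psi$, Prokhorov's theorem reduces the problem to establishing tightness of $\seq{\PP}$ viewed as probability measures on $\Psi$.

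First, I would invoke Rao's decomposition to write each coordinate $X^i$ as a difference $X^i = U^{i,n} - V^{i,n}$ of two nonnegative $\PP_n$-supermartingales, with $\EE^{\PP_n}[U^{i,n}_0 + V^{i,n}_0]$ controlled by $\Var^{\PP_n}[X^i] + \EE^{\PP_n}[\abs{X^i_T}]$. The uniform bound $\sup_n \Var^{\PP_n}[X^i] < \infty$ then transfers to uniform $\lone$-bounds on the maximal processes $\sup_{t \leq T} U^{i,n}_t$ and $\sup_{t \leq T} V^{i,n}_t$ via Doob's maximal inequality for nonnegative supermartingales. In particular, one obtains a Markov-type estimate $\PP_n[\sup_t \abs{X^i_t} > R] \leq C_i / R$ with $C_i$ independent of $n$.

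Second, I would leverage these maximal bounds to establish tightness on $\Psi$. Pseudopaths are finite measures on $[0,T]\times \R^N$ of total mass $T+1$ whose $[0,T]$-marginal is the fixed measure $\Leb + \delta_T$; tightness therefore reduces to uniform concentration of their $\R^N$-components. The uniform maximal estimate above yields, for every $\eps > 0$, a compact set of pseudopaths supported in $[0,T]\times [-R,R]^N$ which captures mass at least $1-\eps$ under every $\PP_n$, giving the required tightness and hence a weakly convergent subsequence on $\Psi$.

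The principal obstacle is to verify that the resulting subsequential weak limit on $\Psi$ actually lies in the image of $\prob^N$ under the pseudopath map, rather than being some extraneous finite measure on $[0,T]\times \R^N$ lacking path structure. This is handled by the observation (Lemma~1 of \cite{MeyZhe84}) that the image of $\sD^N$ under the pseudopath embedding is a closed subset of $\Psi$: any weak limit of pseudopaths whose $[0,T]$-marginal equals $\Leb + \delta_T$ disintegrates back into a \cd{} trajectory up to an obvious equivalence. Combining this closedness with Prokhorov's theorem provides the probability measure $\PP \in \prob^N$ with $\PP_{n_k} \tomz \PP$, completing the argument.
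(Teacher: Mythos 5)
The paper does not actually prove this theorem; it cites it as a compilation of Theorems 4 and 5 of Meyer--Zheng (1984). So there is no paper proof to compare against. But your proposed proof contains a genuine gap in the final step.

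The claim that the image of $\sD^N$ under the pseudopath embedding is a \emph{closed} subset of $\Psi$ is false, and the reference to Lemma 1 of Meyer--Zheng does not support it (that lemma characterizes pseudopath convergence of a sequence in $\sD$ by testing against bounded continuous functions; it says nothing about closedness of the image). A simple counterexample: with $N=1$, $T=1$, take the \cd{} step functions $x_n(t) = (-1)^{\lfloor nt\rfloor}$. Their pseudopaths converge weakly in $\Psi$ to the measure $\tot(\Leb\otimes\delta_{1}+\Leb\otimes\delta_{-1})$ (plus the mass at $T$), whose disintegration in $t$ is a genuine two-point distribution for a.e.~$t$ and hence is \emph{not} the pseudopath of any function, cadlag or otherwise. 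Thus even after your (correct) tightness argument on $\Psi$, you have only produced a weak limit $\PP$ that is a probability measure on $\Psi$; showing that $\PP$ actually lives on the image of $\sD^N$ is the substantive part of Meyer--Zheng's Theorem 4 and must again exploit the uniform bound on the conditional variation. The standard route is to bound, uniformly in $n$, the expected number of up-/down-crossings of every rational interval by a constant multiple of $\Var^{\PP_n}[X^i]$, note that the crossing functionals are lower semicontinuous in the pseudopath topology, and conclude by Fatou that the limit pseudopath is a.s.~of finite crossing number, whence it disintegrates into an actual cadlag path. In your oscillating example this is exactly what breaks, since $\Var^{\PP_n}$ grows like $n$. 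In short: steps 1--4 (Rao decomposition, maximal inequality, tightness on $\Psi$) are fine, but step 5 cannot be disposed of by a closedness claim; you still need the crossing/variation argument, which is where the hypothesis $\sup_n \Var^{\PP_n}[X^i] < \infty$ enters a second, and essential, time.
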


\begin{remark}
The condition $\sup_n \Var^{\PP_n}[X^i]<\infty$ is easy to check if $X^i$ is a
$\PP_n$ martingale, for each $n\in\N$. Indeed, in that case
$\Var^{\PP^n}[X^i]=\eeq{\PP_n}{\abs{X^i_T}}$, with its boundedness being
equivalent to uniform $\lone$-boundedness of the process $X^i$ under all
$\seq{\PP}$.

Similarly, if $X^i$ happens to be a process of finite variation,
$\Var^{\PP_n}[X^i]$ is bounded from above by a (constant multiple) of the
expected total variation of $X^i$. In particular, if $X^i$ is
nonnegative and nondecreasing
under all $\PP^n$, the condition we are looking for is exactly the same
as in the martingale case: $\sup_n \eeq{\PP_n}{\abs{X^i_T}}<\infty$.
\end{remark}

\def\ocirc#1{\ifmmode\setbox0=\hbox{$#1$}\dimen0=\ht0 \advance\dimen0
  by1pt\rlap{\hbox to\wd0{\hss\raise\dimen0
  \hbox{\hskip.2em$\scriptscriptstyle\circ$}\hss}}#1\else {\accent"17 #1}\fi}
  \ifx \cprime \undefined \def \cprime {$\mathsurround=0pt '$}\fi\ifx \k
  \undefined \let \k = \c \fi\ifx \scr \undefined \let \scr = \cal \fi\ifx
  \soft tundefined \def \soft
  {\relax}\fi\def\ocirc#1{\ifmmode\setbox0=\hbox{$#1$}\dimen0=\ht0
  \advance\dimen0 by1pt\rlap{\hbox to\wd0{\hss\raise\dimen0
  \hbox{\hskip.2em$\scriptscriptstyle\circ$}\hss}}#1\else {\accent"17 #1}\fi}
  \ifx \cprime \undefined \def \cprime {$\mathsurround=0pt '$}\fi\ifx \k
  \undefined \let \k = \c \fi\ifx \scr \undefined \let \scr = \cal \fi\ifx
  \soft tundefined \def \soft {\relax}\fi

\end{document}